\tikzset{node distance=2cm, auto}
\DeclareRobustCommand{\coprod}{\mathop{\text{\fakecoprod}}}
\newcommand{\fakecoprod}{%
  \sbox0{$\prod$}%
  \smash{\raisebox{\dimexpr.9625\depth-\dp0}{\scalebox{1}[-1]{$\prod$}}}%
  \vphantom{$\prod$}%
}
\newtheoremstyle{myremark} 
    {7pt}                    
    {7pt}                    
    {}  	                 
    {}                           
    {\bf}       	         
    {.}                          
    {.5em}                       
    {}  
\theoremstyle{plain}
\newtheorem{lemma}{Lemma}[section]
\newaliascnt{theorem}{lemma}
\newaliascnt{definition}{lemma}
\newaliascnt{corollary}{lemma}
\newaliascnt{proposition}{lemma}
\newaliascnt{conjecture}{lemma}
\newaliascnt{claim}{lemma}
\newaliascnt{question}{lemma}
\newaliascnt{remark}{lemma}
\newaliascnt{example}{lemma}
\newtheorem{theorem}[lemma]{Theorem}
\newtheorem*{theorem-main}{Theorem~\ref{thm:main}}
\newtheorem{definition}[definition]{Definition}
\newtheorem{corollary}[corollary]{Corollary}
\newtheorem{proposition}[proposition]{Proposition}
\newtheorem{conjecture}[conjecture]{Conjecture}
\theoremstyle{definition}
\newtheorem{question}[question]{Question}
\theoremstyle{myremark}
\newtheorem{remark}[remark]{Remark}
\newtheorem{example}[example]{Example}
\newcommand{\diam}{\mathrm{diam}}
\newcommand{\comma}[2]{(#1 \mathbin{\downarrow} #2)}
\newcommand{\rcomma}[2]{[#1 \mathbin{\downarrow} #2]}
\newcommand{\I}{\mathcal{I}}        
\newcommand{\cU}{\mathcal{U}}       
\newcommand{\vrminf}[2]{\mathrm{VR}_{\infty}^{\mathrm{m}}(#1;#2)}
\newcommand{\cechminf}[2]{\mathrm{\check{C}ech}_{\infty}^{\mathrm{m}}(#1;#2)}
\newcommand{\vrless}[2]{\mathrm{VR}_<(#1;#2)}
\newcommand{\vrmless}[2]{\mathrm{VR}_<^{\mathrm{m}}(#1;#2)}
\newcommand{\vrmleq}[2]{\mathrm{VR}_{\le}^{\mathrm{m}}(#1;#2)}
\newcommand{\km}{\mathrm{K}^{\mathrm{m}}}
\newcommand{\finkm}{\mathrm{finK}^{\mathrm{m}}}
\newcommand{\m}[1]{\mathcal{#1}}   
\newcommand{\mt}[1]{{#1}^{\mathrm{m}}}   
\newcommand{\mtm}[1]{\mt{\m{#1}}}   
\newcommand{\tx}{\tilde{x}}
\newcommand{\ts}{\tilde{s}}
\newcommand{\deltam}[1]{\delta(#1)} 
\newcommand{\smt}{\cat{MetTh}}    
\newcommand{\psmt}{\cat{pMetTh}}    
\newcommand{\Top}{\cat{Top}}
\newcommand{\cost}{\mathrm{cost}}
\newcommand{\avg}{\m{A}}
\newcommand{\bc}{\Phi}
\newcommand{\bcs}{\widetilde{\Phi}}
\newcommand{\mRX}{R_{\m{X}}}
\newcommand{\mRY}{R_{\m{Y}}}
\newcommand{\iso}{\cong}			
\newcommand{\homt}{\simeq}			
\newcommand{\inj}{\hookrightarrow}		
\newcommand{\surj}{\twoheadrightarrow} 	
\newcommand{\Z}{\mathbb{Z}}			
\newcommand{\R}{\mathbb{R}}			
\newcommand{\N}{\mathbb{N}}			
\renewcommand{\P}{\mathcal{P}}      
\newcommand{\st}{\: | \:}                       
\newcommand{\vr}[2]{\mathrm{VR}(#1;#2)}
\newcommand{\cech}[2]{\check{\mathrm{C}}\mathrm{ech}(#1;#2)}
\newcommand{\cechinf}[2]{\check{\mathrm{C}}\mathrm{ech}_\infty(#1;#2)}
\newcommand{\vrm}[2]{\mathrm{VR}^{\mathrm{m}}(#1;#2)}
\newcommand{\cechm}[2]{\check{\mathrm{C}}\mathrm{ech}^{\mathrm{m}}(#1;#2)}
\newcommand{\supp}{\mathrm{supp}}
\newcommand{\nerve}[1]{\mathcal{N}(#1)}
\newcommand{\nervem}[1]{\mathcal{N}^{\mathrm{m}}(#1)}
\newcommand{\ac}{\ll}
\renewcommand{\S}[2]{\mathcal{S}(#1,#2)}
\newcommand{\comp}{\circ}
\newcommand{\tild}{\widetilde}               
\newcommand{\disj}{\sqcup}
\newcommand{\id}{\mathrm{id}}                     
\newcommand{\cat}[1]{\mathsf{#1}}
\renewcommand{\hom}[1]{\textup{Hom}_{#1}}    
\newcommand{\dum}{\square}  
\newcommand{\term}{\bullet} 
\newcommand{\Set}{\mathsf{Set}}
\newcommand{\met}{\mathsf{Met}}
\newcommand{\pmet}{\mathsf{pMet}}
\newcommand{\scpx}{\mathsf{sCpx}}
\newcommand{\defn}[1]{\textbf{#1}}
\newcommand{\defeq}{\coloneqq} 
\newcommand{\dd}{\mathrm{d}}
\title{Operations on Metric Thickenings}
\author{
Henry Adams \qquad\qquad Johnathan Bush \qquad\qquad Joshua Mirth
\institute{Colorado State University\\
Colorado, USA}
\email{\{lastname\}@math.colostate.edu }
}
\begin{document}
\maketitle

\begin{abstract}
Many simplicial complexes arising in practice have an associated metric space structure on the vertex set but not on the complex, e.g.\ the Vietoris--Rips complex in applied topology.
We formalize a remedy by introducing a category of simplicial metric thickenings whose objects have a natural realization as metric spaces.
The properties of this category allow us to prove that, for a large class of thickenings including Vietoris--Rips and \v{C}ech thickenings, the product of metric thickenings is homotopy equivalent to the metric thickenings of product spaces, and similarly for wedge sums.
\end{abstract}

\section{Introduction}\label{sec:introduction}

Applied topology studies geometric complexes such as the Vietoris--Rips and \v{C}ech simplicial complexes.
These are constructed out of metric spaces by combining nearby points into simplices.
We observe that proofs of statements related to the topology of Vietoris--Rips and \v{C}ech simplicial complexes often contain a considerable amount of overlap, even between the different conventions within each case (for example, $\le$ versus $<$).
We attempt to abstract away from the particularities of these constructions and consider instead a type of simplicial metric thickening object.
Along these lines, we give a natural categorical setting for so-called \defn{simplicial metric thickenings}~\cite{AdamaszekAdamsFrick2018}.

In Sections~\ref{sec:motivation} and~\ref{sec:relatedWork}, we provide motivation and briefly summarize related work.
Then, in Section~\ref{sec:simplicialMetricThickening}, we introduce the definition of our main objects of study: the category $\smt$ of simplicial metric thickenings and the associated metric realization functor $\mt{\dum}$ from $\smt$ to the category of metric spaces.
We define $\smt$ as a particular instance of a comma category and prove that this definition satisfies certain desirable properties, e.g.\ it possesses all finite products.
We define simplicial metric thickenings as the image of the metric realization of $\smt$.
Particular examples of interest include the Vietoris--Rips and \v{C}ech simplicial thickenings.

In Section~\ref{sec:limits}, we prove that certain (co)limits are preserved, up to homotopy equivalence, by the functors defined in Section~\ref{sec:simplicialMetricThickening}. 
For example, we show that the metric realization functor factors over products and wedge sums. 
We also prove that the analogous (co)limits are preserved for the Vietoris--Rips and \v{C}ech simplicial thickenings.

\section{Motivation}\label{sec:motivation}

Our motivation is twofold: first to give a general and categorical definition of simplicial metric thickenings, which first appeared in~\cite{AdamaszekAdamsFrick2018}, though primarily in the special case of the Vietoris--Rips metric thickenings.
Second, to use this framework to give succinct proofs about the homotopy types of these objects while de-emphasizing the particular details of the Vietoris--Rips or \v{C}ech complex constructions.

Let us first explain the reason to consider an alternative to the simplicial complex topology.
While the vertex set of a Vietoris--Rips or \v{C}ech complex is a metric space, the simplicial complex itself may not be.
A simplicial complex is metrizable if and only if it is locally finite, meaning each vertex is contained in only a finite number of simplices, and a Vietoris--Rips complex (with positive scale parameter) is not locally finite if it is not constructed from a discrete metric space.
Similarly, the inclusion of a metric space, $X$, into its Vietoris--Rips or \v{C}ech complex is not continuous unless $X$ is discrete, since the restriction of the simplicial complex topology to the vertex set is the discrete topology.
Both of these problems are addressed by the Vietoris--Rips and \v{C}ech \emph{metric thickenings}, which are metric spaces and for which there is a canonical isometric embedding of the underlying metric space.

As an example, let us consider in more detail the differences between the Vietoris--Rips simplicial complex and the Vietoris--Rips metric thickening at the level of objects and morphisms.
Given a metric space $X$, the Vietoris--Rips simplicial complex $\vr{X}{r}$ has as its simplicies all finite subsets $\sigma\subseteq X$ of diameter at most $r$.
We interpret this construction as an element of the image of a bifunctor $\vr{\dum}{\dum}$ with domain $\met\times [0,\infty)$, where $[0,\infty)$ is the poset $([0,\infty),\le)$ viewed as a category, and with codomain $\scpx$, the category of simplicial complexes and simplicial maps.
There is then a geometric realization functor from $\scpx$ to the category of topological spaces.
For a fixed metric space $X$, we have a functor from $[0,\infty)$ to topological spaces, in particular, a morphism $\vr{X}{r}\hookrightarrow \vr{X}{r'}$ whenever $r\le r'$.
As a simplicial complex, $\vr{X}{0}$ contains a vertex for each point of $X$ and no higher-dimensional simplices.
However, if $X$ is not a discrete metric space, then $\vr{X}{0}$ and $X$ may not even be homotopy equivalent because $\vr{X}{0}$ is the set $X$ equipped with the discrete topology.
Problems arise also for $r>0$, when $\vr{X}{r}$ need not be metrizable---a simplicial complex is metrizable if and only if it is locally finite.
In contrast, Vietoris--Rips metric thickenings are a functor $\vrm{\dum}{\dum}$ from $\met\times [0,\infty)$ to metric spaces, not just to topological spaces.
In particular, $\vrm{X}{0}$ is isometric to $X$.
Furthermore, given a 1-Lipschitz map $X\to Y$, we obtain a natural transformation $\vrm{X}{\dum}\to\vrm{Y}{\dum}$.
So, $\vrm{\dum}{\dum}$ is indeed a bifunctor from $\met\times [0,\infty)$ to $\met$.

There is a fair bit known about Vietoris--Rips complexes $\vr{X}{r}$ that does not immediately transfer to the metric thickenings $\vrm{X}{r}$.
Some properties, such as the stability of persistent homology~\cite{ChazaldeSilvaOudot2014}, are potentially difficult to transfer in a categorical fashion.
Other properties, such as statements about products and wedge sums, do transfer over cleanly.

Whereas proofs about homotopy types of Vietoris--Rips and \v{C}ech simplicial complexes often involve simplicial collapses, the corresponding proofs for metric thickenings instead often involve deformation retractions not written as a sequence of simplicial collapses.
We give two versions of this correspondence in Section~\ref{sec:limits}, including explicit formulas proving that the Vietoris--Rips thickening of an $L^\infty$ product (respectively wedge sum) of metric spaces deformation retracts onto the product (wedge sum) of the Vietoris--Rips thickenings.
Hence, thickenings behave nicely with respect to certain limits and colimits.

\section{Related Work}\label{sec:relatedWork}

This paper draws on three distinct bodies of work.
The topology of Vietoris--Rips and \v{C}ech complexes has been widely studied in the applied topology community~\cite{AdamaszekAdams2017,adamaszek2017vietoris,GasparovicCompleteCharacterization1Dimensional2017,gasparovic2018relationship,lim2020vietoris,zaremsky2019}.
Major questions include determining the homotopy type of the Vietoris--Rips complex of a given space at all scale parameters $r$ (see in particular~\cite{AdamaszekAdams2017} which determines all Vietoris--Rips complexes of the circle), and of determining the topology of the Vietoris--Rips complex of a product, wedge sum, or other gluing of spaces whose individual Vietoris--Rips complexes are known.
Metric gluings were studied extensively in~\cite{adamaszek2017vietoris}, and products in~\cite{carlsson2020persistent,gakhar2019k}.
Here we study similar questions, not about the Vietoris--Rips simplicial complex but the Vietoris--Rips metric thickening.
These latter objects were introduced in~\cite{AdamaszekAdamsFrick2018}.

A well-known construction is the metric of barycentric coordinates, which is a metrization of any simplicial complex $K$, as explained in~\cite[Section 7A.5]{bridson2011metric}, and can be considered a functor $B \colon \scpx \to \met$.
Consider a real vector space $V$ with basis $K^0$, the vertex set of $K$, equipped with an inner product $\langle - , - \rangle$ such that this basis is orthonormal.
We can realize $K$ as the set of all finite, convex, $\R$-linear combinations of basis vectors (i.e.\ vertices) contained in some simplex.
The inner product defines a metric, $d_b(u,v) = \sqrt{\langle u-v, u-v \rangle}$, on $V$.
The restriction of this metric to $K$ is called the metric of barycentric coordinates.
Dowker proves in~\cite{dowker1952topology} that the identity map from a simplicial complex $K$ with the simplicial complex topology to $K$ with the metric of barycentric coordinates is a homotopy equivalence.
A key difference between the simplicial metric thickenings considered in this paper and the metric of barycentric coordinates is the following: with barycentric coordinates (as with the simplicial complex topology) the vertex set is equipped with the discrete topology, but in a simplicial metric thickening the vertex set need not be discrete.
Another functor from simplicial complexes to metric spaces is studied in~\cite{Marin2017Measure,Marin2017}.
This functor also produces a space with the same (weak) homotopy type as the geometric realization.
Roughly, this construction is to take a simplicial complex $K$ and consider the space of random variables $X \colon \Omega \to K^0$ where $\Omega$ is some reference probability space and $K^0$ denotes the vertex set of $K$.
The space $L(\Omega,K)$ which metrizes $K$ is the subset of random variables which give positive probability to all subsets of $K^0$ which correspond to simplices in $K$, and the metric is given by the measure (in $\Omega$) of the set on which two random variables differ.
This construction also places the discrete topology on the vertex set $K^0$, and therefore typically disagrees with the homotopy type of the simplicial metric thickening.

Finally, we draw some inspiration from the idea of a probability monad in applied category theory.
A probability monad, or more specifically the Kantorovich monad~\cite{FritzPerrone2019,Perrone2018}, is a way to put probability theory on a categorical footing.
A probability monad $P$ is defined so that if $X$ is a metric space, then $PX$ is a collection of random elements in $X$.
As the main data of the monad, there is an evaluation map $PPX \mapsto PX$ defined by averaging.
Furthermore, an algebra of the probability monad, i.e.\ an evaluation map $PX \mapsto X$, is analogous to a Karcher or Fr\'{e}chet mean map as used in the proof of~\cite[Theorem~4.2]{AdamaszekAdamsFrick2018} and~\cite[Theorem~4.6, Theorem~5.5]{AdamsMirth2019}.
Moreover, the Kantorovich monad of~\cite{FritzPerrone2019} places the Wasserstein metric on the space of probability measures, as we do when defining simplicial metric thickenings.

\section{The Category of Simplicial Metric Thickenings}\label{sec:simplicialMetricThickening}

We begin by fixing some notation.
Given a metric space $X$, let $\P{X}$ denote the set of all Radon probability measures on $X$ with finite $p$-th moment.
With the $p$-Wasserstein metric, $\P{X}$ is a metric space; for details see Section~\ref{ssec:metricRealizationFunctor}.
There is a canonical inclusion $\delta \colon X \to \P{X}$ given by $\delta(x) = \delta_x$.
To avoid a proliferation of subscripts we will also write $\deltam{x}$.
We will write $\nu \ac \mu$ to mean that $\nu$ is absolutely continuous with respect to $\mu$, that is, if whenever $\mu(E)=0$ for some measurable $E \subseteq X$, then $\nu(E) = 0$.
Let $\I{X}$ denote the subspace of $\P{X}$ consisting of measures with finite support, i.e.\ those of the form $\mu = \sum_{i=0}^n \lambda_i \deltam{x_i}$.

\begin{definition}\label{def:simplicialMetricThickening}
A \defn{simplicial metric thickening} of a metric space $X$ is a subspace $\m{K}$ of $\I{X}$ which satisfies:
\begin{enumerate}[topsep=0pt,itemsep=-1ex,partopsep=1ex,parsep=1ex]
\item The image of $\delta \colon X \to \I{X}$ is contained in $\m{K}$, and
\item If $\mu \in \m{K}$ and $\nu \ac \mu$, then $\nu \in \m{K}$.
\end{enumerate}
\end{definition}

As a point of comparison, recall the definition of an abstract simplicial complex:
\begin{definition}\label{def:abstractSimplicialComplex}
An \defn{abstract simplicial complex} on a set $V$ is a subset $K$ of $2^V$ consisting only of finite sets which satisfies
\begin{enumerate}[topsep=0pt,itemsep=-1ex,partopsep=1ex,parsep=1ex]
\item The image of the map $v \mapsto \{v\}$ is in $K$, and 
\item If $\sigma \in K$ and $\tau \subseteq \sigma$, then $\tau \in K$.
\end{enumerate}
\end{definition}

\begin{example}
A motivating example of a simplicial thickening is the Vietoris--Rips simplicial metric thickening~\cite{AdamaszekAdamsFrick2018}.
Recall the Vietoris--Rips complex, $\vr{X}{r}$, as described in Section~\ref{sec:motivation}.
By necessity of the construction, the vertices of $\vr{X}{r}$ have an associated metric, even though $\vr{X}{r}$ need not be metrizable.
The associated simplicial metric thickening, $\vrm{X}{r}$, is the subset of $\P{X}$ containing all measures whose support set is a simplex in $\vr{X}{r}$, and it is a metric space.

We will frequently return to this example.
In particular, the homotopy type of the Vietoris--Rips complex of various spaces is widely studied~\cite{AdamaszekAdams2017,AdamaszekAdamsFrick2018,chacholski2020homotopical,ChazaldeSilvaOudot2014,GasparovicCompleteCharacterization1Dimensional2017,gasparovic2018relationship,Virk1DimensionalIntrinsicPersistence2017,virk2019rips,zaremsky2019}.
By formulating a category of simplicial metric thickenings which includes Vietoris--Rips thickenings, we are able to compute the homotopy type of Vietoris--Rips thickenings of spaces constructed from limit and colimit operations.
\end{example}

There are several reasonable choices of morphisms between simplicial metric thickenings.
Since they are metric spaces, any map of metric spaces could be allowed (see Section~\ref{ssec:simplicialThickeningComma} for several choices of maps of metric spaces).
Alternatively, one could define a morphism between simplicial metric thickenings $\m{K}$ and $\m{L}$ of metric spaces $X$ and $Y$, respectively, to be a function $f \colon X \to Y$ such that the pushforward $f_\# \colon \m{K} \to \P{Y}$ has its image contained in $\m{L}$.
In Sections~\ref{ssec:commaCategories} and~\ref{ssec:simplicialThickeningComma} we construct a description of a category which has as objects the simplicial metric thickenings of Definition~\ref{def:simplicialMetricThickening} and for which this latter definition of morphisms arises naturally.

\subsection{Comma Categories}\label{ssec:commaCategories}

We work with the standard notions of category theory; for further details, we refer the reader to~\cite{Riehl16} (for example).
We often abuse notation and write $c \in \cat{C}$ when $c$ is an object of the category $\cat{C}$.

\noindent
\begin{minipage}{.8\textwidth}
\begin{definition}\label{def:commaCategory}
Given functors $S \colon \cat{A} \to \cat{C}$ and $T \colon \cat{B} \to \cat{C}$, the \defn{comma category} $\comma{S}{T}$ has as objects all triples $(a,b,\phi)$ where $a \in \cat{A}$, $b \in \cat{B}$, and $\phi \colon Sa \to Tb$, and as morphisms all pairs $(f,g)$ with $f \in \hom{\cat{A}}(a,a')$ and $g \in \hom{\cat{B}}(b,b')$, such that the following diagram commutes:
\end{definition}
\end{minipage}
\begin{minipage}{.2\textwidth}
\centering
\tikzset{node distance=1.5cm}
\begin{tikzpicture}
   \node (Sa) {$Sa$};
   \node (Saa) [below of=Sa] {$Sa'$};
   \node (Tb) [right of=Sa] {$Tb$};
   \node (Tbb) [below of=Tb] {$Tb'$};
   \draw[->] (Sa) to node  {$\phi$} (Tb);
   \draw[->] (Sa) to node [swap] {$Sf$} (Saa);
   \draw[->] (Tb) to node [swap] {$Tg$} (Tbb);
   \draw[->] (Saa) to node [swap] {$\phi'$} (Tbb);
\end{tikzpicture}
\end{minipage}

We introduce the following subcategory of a comma category.
\begin{definition}\label{def:restrictedCommaCategory}
The \defn{restricted comma category} $\rcomma{S}{T}$ is the full subcategory defined to contain all objects $(a,b,\phi)\in\comma{S}{T}$ such that $\phi$ is an isomorphism.
\end{definition}

For an arbitrary comma category, the order of the source functor $S$ and target functor $T$ is important: $\comma{S}{T}$ and $\comma{T}{S}$ are not equivalent as categories in general.
However, restricted comma categories are less particular.

\begin{proposition}\label{prop:symmetry}
The categories $\rcomma{S}{T}$ and $\rcomma{T}{S}$ are isomorphic.
\end{proposition}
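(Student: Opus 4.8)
The plan is to exhibit an explicit isomorphism of categories by constructing a pair of mutually inverse functors that swap the two components of each object and invert its structure morphism. Define $F \colon \rcomma{S}{T} \to \rcomma{T}{S}$ on objects by $F(a,b,\phi) = (b,a,\phi\inv)$; this is well-defined precisely because membership in a restricted comma category forces $\phi$ to be an isomorphism, so $\phi\inv \colon Tb \to Sa$ exists and is itself an isomorphism, hence $(b,a,\phi\inv)$ is a legitimate object of $\rcomma{T}{S}$. On a morphism $(f,g) \colon (a,b,\phi) \to (a',b',\phi')$ in $\rcomma{S}{T}$, where $f \in \hom{\cat{A}}(a,a')$ and $g \in \hom{\cat{B}}(b,b')$, set $F(f,g) = (g,f)$, simply interchanging the two arrows.

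The crux of the argument is verifying that $(g,f)$ really is a morphism in $\rcomma{T}{S}$, i.e.\ that inverting the structure isomorphisms preserves the commuting-square condition. By definition of a morphism in $\comma{S}{T}$, the original pair $(f,g)$ satisfies $\phi' \comp Sf = Tg \comp \phi$, whereas $(g,f)$ being a morphism in $\comma{T}{S}$ demands $\sfrac{}{}{\phi'}\inv \comp Tg = Sf \comp \phi\inv$. To pass from the first identity to the second I would compose on the left with ${\phi'}\inv$ and on the right with $\phi\inv$, obtaining ${\phi'}\inv \comp \phi' \comp Sf \comp \phi\inv = {\phi'}\inv \comp Tg \comp \phi \comp \phi\inv$, which simplifies exactly to $Sf \comp \phi\inv = {\phi'}\inv \comp Tg$ using that $\phi$ and $\phi'$ are isomorphisms. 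This is the only step where the restriction to isomorphisms is genuinely used, and it is the main (though short) obstacle: one must be careful that the manipulation is purely formal and valid in any category, relying solely on $\phi,\phi'$ being invertible.

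With well-definedness established, functoriality of $F$ is immediate: the identity $(\id_a,\id_b)$ maps to $(\id_b,\id_a)$, the identity of $(b,a,\phi\inv)$, and since composition in a comma category is componentwise, $F((f',g')\comp(f,g)) = F(f'\comp f, g'\comp g) = (g'\comp g, f'\comp f) = (g',f')\comp(g,f) = F(f',g')\comp F(f,g)$. By the same recipe I would define $G \colon \rcomma{T}{S} \to \rcomma{S}{T}$ sending $(b,a,\psi) \mapsto (a,b,\psi\inv)$ and $(g,f)\mapsto(f,g)$, which is a functor by the identical verification (the situation being manifestly symmetric in $S$ and $T$). Finally, I would check that $G$ and $F$ are mutually inverse: on objects $GF(a,b,\phi) = G(b,a,\phi\inv) = (a,b,(\phi\inv)\inv) = (a,b,\phi)$, and on morphisms $GF(f,g) = (f,g)$, so $GF = \id$ and symmetrically $FG = \id$. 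Hence $F$ is an isomorphism of categories and $\rcomma{S}{T} \iso \rcomma{T}{S}$.
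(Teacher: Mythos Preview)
Your proof is correct. The paper explicitly omits the proof of this proposition, so there is nothing to compare against; the argument you give---swapping the two components and inverting the structure isomorphism, then checking that the commuting square $\phi' \comp Sf = Tg \comp \phi$ becomes $Sf \comp \phi\inv = {\phi'}\inv \comp Tg$ upon pre- and post-composition with the inverses---is exactly the standard verification one would expect, and all steps are valid.
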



The proof of Proposition~\ref{prop:symmetry} is omitted. 
Our main theorems in Section~\ref{sec:limits} are about various types of limits and colimits.
As we explain below, restricted comma categories inherit these structures from their source and target categories.

Observe that any comma category $\comma{S}{T}$ has two functors $P_\cat{A} \colon \comma{S}{T} \to \cat{A}$ and $P_\cat{B} \colon \comma{S}{T} \to \cat{B}$, the \defn{domain} and \defn{codomain} functors.
These are given by sending a triple $(a,b,\phi)$ to $a$ and to $b$, respectively, and by sending a morphism $(f,g)$ to $f$ and to $g$, respectively.
We will denote the functors $P_\cat{A} \colon \rcomma{S}{T} \to \cat{A}$ and $P_\cat{B} \colon \rcomma{S}{T} \to \cat{B}$ with the same symbols.

\begin{lemma}\label{lem:limits}
Fix categories $\cat{A}$, $\cat{B}$, and $\cat{C}$ and functors $S\colon \cat{A}\to\cat{C}$ and $T\colon\cat{B}\to\cat{C}$. 
For some small index category $\cat{J}$, suppose that $\cat{A}$ and $\cat{B}$ admit colimits under $\cat{J}$-shaped diagrams and that $S$ preserves colimits under $\cat{J}$-shaped diagrams.
Then $\comma{S}{T}$ admits colimits under $\cat{J}$-shaped diagrams.

Dually, if $\cat{A}$ and $\cat{B}$ admit limits over $\cat{J}$-shaped diagrams and $T$ preserves limits over $\cat{J}$-shaped diagrams, 
then $\comma{S}{T}$ admits limits over $\cat{J}$-shaped diagrams.

\end{lemma}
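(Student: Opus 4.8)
The plan is to construct the colimit of a $\cat{J}$-shaped diagram in $\comma{S}{T}$ explicitly, using the colimits that already exist in $\cat{A}$ and $\cat{B}$, and then verify the universal property. A diagram $D \colon \cat{J} \to \comma{S}{T}$ assigns to each $j \in \cat{J}$ a triple $(a_j, b_j, \phi_j)$ with $\phi_j \colon Sa_j \to Tb_j$, and to each morphism $u \colon j \to j'$ a pair $(f_u, g_u)$ making the evident square commute. Post-composing with the domain and codomain functors $P_\cat{A}$ and $P_\cat{B}$ yields diagrams $P_\cat{A} \comp D \colon \cat{J} \to \cat{A}$ and $P_\cat{B} \comp D \colon \cat{J} \to \cat{B}$. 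By hypothesis these admit colimits; write $a = \mathrm{colim}\, (P_\cat{A} \comp D)$ and $b = \mathrm{colim}\, (P_\cat{B} \comp D)$, with coprojections $\iota_j \colon a_j \to a$ and $\kappa_j \colon b_j \to b$.

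The key step is to produce the connecting morphism $\phi \colon Sa \to Tb$. Here is where the hypothesis that $S$ preserves $\cat{J}$-shaped colimits is essential: it gives $Sa \iso \mathrm{colim}\,(S \comp P_\cat{A} \comp D)$, so a map out of $Sa$ is determined by a cocone on the diagram $j \mapsto Sa_j$. I would define that cocone by the composites $Tg_u \comp (\text{coprojection})$—more precisely, the family $\{Tk_j \comp \phi_j \colon Sa_j \to Tb\}_{j}$, where $k_j \colon b_j \to b$ are the colimit coprojections in $\cat{B}$. The commutativity of the original squares in $\comma{S}{T}$, together with functoriality of $T$ and the cocone identities for $b$, shows this family is a cocone under $S \comp P_\cat{A} \comp D$; the universal property of $Sa$ (as a preserved colimit) then yields a unique $\phi \colon Sa \to Tb$ with $\phi \comp S\iota_j = Tk_j \comp \phi_j$ for all $j$. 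These last equalities are precisely the statements that each pair $(\iota_j, k_j)$ is a morphism $(a_j, b_j, \phi_j) \to (a, b, \phi)$ in $\comma{S}{T}$, so $(a, b, \phi)$ is equipped with a cocone under $D$.

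It then remains to verify that $(a, b, \phi)$ together with this cocone is \emph{universal}. Given any other cocone with vertex $(a', b', \phi')$ and legs $(\iota'_j, k'_j)$, the components $\{\iota'_j\}$ and $\{k'_j\}$ are cocones in $\cat{A}$ and $\cat{B}$, inducing unique $h \colon a \to a'$ and $\ell \colon b \to b'$ by the universal properties there. One checks that $(h, \ell)$ is a morphism in $\comma{S}{T}$, i.e.\ that $\phi' \comp Sh = T\ell \comp \phi$; this follows by precomposing with each $S\iota_j$ (a jointly epic family, again because $Sa$ is the preserved colimit) and chasing the defining relations. Uniqueness of $(h,\ell)$ is immediate since $h$ and $\ell$ are each unique. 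The dual statement for limits follows by the same argument applied in $\opp{(\comma{S}{T})}$, using that $\comma{S}{T}$ and $\comma{T}{S}$ swap roles under taking opposites and that $T$-preservation of limits plays the part of $S$-preservation of colimits.

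\emph{The main obstacle} is the construction and well-definedness of $\phi$: every other piece reduces to a routine invocation of the colimits in $\cat{A}$ and $\cat{B}$, but $\phi$ cannot be built without the preservation hypothesis on $S$, and the bookkeeping to confirm that the proposed family is genuinely a cocone (so that the universal property of $Sa$ applies) is the one place where all the commuting squares of the original diagram must be used simultaneously.
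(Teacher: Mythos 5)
Your proposal is correct and follows the same route as the paper's proof: form the colimits $\ell_\cat{A}$, $\ell_\cat{B}$ of $P_\cat{A}D$ and $P_\cat{B}D$, use preservation by $S$ to induce the connecting morphism $\phi \colon S\ell_\cat{A} \to T\ell_\cat{B}$ from the composite cocone $\{Tk_j \comp \phi_j\}_j$, and verify universality componentwise, with the dual case by passing to opposite categories. Your write-up is in fact slightly more detailed where it matters---the paper dismisses the verification that $(h,\ell)$ is a morphism of $\comma{S}{T}$ with ``everything in sight commutes,'' whereas you correctly identify that this uses the joint epicness of the coprojections $S\iota_j$, which is exactly where the preservation hypothesis is needed a second time.
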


\begin{proof}
We will prove only the case for colimits; the case for limits follows by dualizing the proof. 

Let $D \colon \cat{J} \to \comma{S}{T}$ be a diagram in the comma category, and denote the objects in its image by $(a_j,b_j,\phi_j)$ for $j \in \cat{J}$.
Then $P_{\cat{A}}D \colon \cat{J} \to \cat{A}$ and $P_{\cat{B}}D \colon \cat{J} \to \cat{B}$ are $\cat{J}$-shaped diagrams in $\cat{A}$ and $\cat{B}$, and so have colimits $\ell_a$ and $\ell_b$.
There is a natural transformation $\Phi = (\phi_j)_{j \in J} \colon SP_{\cat{A}}D \implies TP_{\cat{B}}D$.
Observe that $SP_{\cat{A}}D \colon \cat{J} \to \cat{C}$ is a diagram in $\cat{C}$ with colimit $S\ell_a$ because $S$ preserves colimits.
Let $Z \colon P_{\cat{B}}D \implies \ell_\cat{B}$ denote the cocone natural transformation.
Then $T Z \Phi \colon SP_{\cat{A}}D \implies T\ell_B$ is a cocone over $SP_{\cat{A}}D$, so there exists a unique morphism $\psi \colon S\ell_\cat{A} \to T\ell_\cat{B}$ (see Figure~\ref{fig:cocone}).


The colimit of $D$ is $(\ell_\cat{A}, \ell_\cat{B}, \psi)$.
Indeed, suppose that $(a,b,\chi)$ is a cocone over $D$.
Then there are unique morphisms $f_1 \colon \ell_\cat{A} \to a$ and $f_2 \colon \ell_\cat{B} \to b$ because composition with $P_\cat{A}$ or $P_\cat{B}$ gives diagrams in $\cat{A}$ and $\cat{B}$.
The morphism $(f_1,f_2) \in \hom{\comma{S}{T}}((\ell_\cat{A},\ell_\cat{B},\phi),(a,b,\chi))$ is well-defined because everything in sight commutes.
Hence, $\comma{S}{T}$ admits colimits under $J$-shaped diagrams.

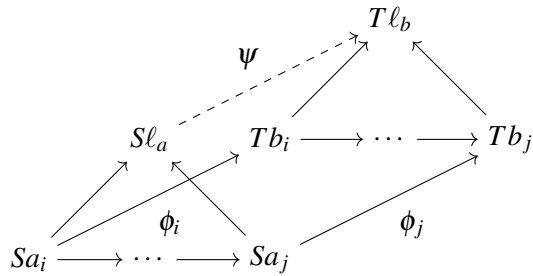
\begin{figure}[h]
\begin{center}
\tikzset{node distance=1.6cm}
\begin{tikzpicture}
    \node (ai) {$Sa_i$};
    \node (ac) [right of=ai] {$\cdots$};
    \node (aj) [right of=ac] {$Sa_j$};
    \node (la) [above of=ac] {$S\ell_a$};
    \draw[->] (ai) to node {} (ac);
    \draw[->] (ac) to node {} (aj);
    \draw[->] (ai) to node {} (la);
    \draw[->] (aj) to node {} (la);
    \node (bi) [right of=la] {$Tb_i$};
    \node (bc) [right of=bi] {$\cdots$};
    \node (bj) [right of=bc] {$Tb_j$};
    \node (lb) [above of=bc] {$T\ell_b$};
    \draw[->] (bi) to node {} (bc);
    \draw[->] (bc) to node {} (bj);
    \draw[->] (bi) to node {} (lb);
    \draw[->] (bj) to node {} (lb);
    \draw[->] (ai) to node [swap] {$\phi_i$} (bi);
    \draw[->] (aj) to node [swap] {$\phi_j$} (bj);
    \draw[->,dashed] (la) to node {$\psi$} (lb);
\end{tikzpicture}
\caption{There exists a map, $\psi$, because $T\ell_{\cat{B}}$ is a cone over $SP_{\cat{A}}D$.}
\label{fig:cocone}
\end{center}
\end{figure}
\end{proof}

\begin{corollary}\label{cor:restrictedlimits}
With the setup of Section~\ref{sec:limits}, suppose that the image of $D \colon \cat{J} \to \comma{S}{T}$ is contained in the subcategory $\rcomma{S}{T}$.
Then the limit over (respectively, colimit under) $D$ is contained in $\rcomma{S}{T}$.
\end{corollary}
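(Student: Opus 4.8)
The plan is to show directly that the comparison map $\psi$ constructed in the proof of Lemma~\ref{lem:limits} is an isomorphism, so that the resulting object $(\ell_\cat{A},\ell_\cat{B},\psi)$ lies in $\rcomma{S}{T}$. I will treat the colimit case; the limit case is dual, and may alternatively be deduced from it through the isomorphism $\rcomma{S}{T}\iso\rcomma{T}{S}$ of Proposition~\ref{prop:symmetry}. First I would observe that, because the image of $D$ lies in $\rcomma{S}{T}$, every component $\phi_j$ is an isomorphism; hence the natural transformation $\Phi=(\phi_j)_{j\in\cat{J}}\colon SP_\cat{A}D\implies TP_\cat{B}D$ is a natural isomorphism, so $SP_\cat{A}D$ and $TP_\cat{B}D$ are isomorphic $\cat{J}$-shaped diagrams in $\cat{C}$.

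Next I would exploit the explicit construction. Let $W\colon P_\cat{A}D\implies\ell_\cat{A}$ and $Z\colon P_\cat{B}D\implies\ell_\cat{B}$ be the colimit cocones; since $S$ preserves colimits, $SW\colon SP_\cat{A}D\implies S\ell_\cat{A}$ is colimiting, and $\psi$ is the unique map satisfying $\psi\circ(SW)_j=(TZ)_j\circ\phi_j$ for every $j$. The heart of the argument is to recognize $\psi$ as the canonical comparison between the colimits of two isomorphic diagrams: the cocone $\bigl(T\ell_\cat{B},\,((TZ)_j\circ\phi_j)_j\bigr)$ over $SP_\cat{A}D$ corresponds, under precomposition with the natural isomorphism $\Phi^{-1}$, to the cocone $TZ$ over $TP_\cat{B}D$. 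Hence $T\ell_\cat{B}$ with this cocone is a colimit of $SP_\cat{A}D$ exactly when $TZ$ is a colimit cocone for $TP_\cat{B}D$, i.e.\ when $T$ carries $\ell_\cat{B}$ to a colimit. Granting that, both $S\ell_\cat{A}$ and $T\ell_\cat{B}$ are colimits of the single diagram $SP_\cat{A}D$, and $\psi$ is the unique map intertwining their cocones; by uniqueness of colimits it is an isomorphism, so $(\ell_\cat{A},\ell_\cat{B},\psi)\in\rcomma{S}{T}$.

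The main obstacle is precisely that penultimate step: the hypotheses carried over from Lemma~\ref{lem:limits} guarantee only that $S$ preserves $\cat{J}$-shaped colimits, whereas the argument additionally needs $T$ to send the colimit $\ell_\cat{B}$ to a colimit (dually, the limit case needs $S$ to preserve the relevant limit). Without this, $\psi$ can genuinely fail to be an isomorphism, so I would make the symmetric preservation hypothesis explicit in the statement---that $S$ and $T$ both preserve the $\cat{J}$-shaped colimits (respectively limits) in question. This costs nothing in the intended applications, since for the Vietoris--Rips and \v{C}ech thickenings the functors of interest preserve the relevant finite products and wedge sums on both sides. With that in place, the remaining verifications---naturality of $\Phi$, the cocone correspondence along $\Phi^{-1}$, and uniqueness of the induced map---are routine.
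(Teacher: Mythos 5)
Your argument is, in substance, the paper's own: the paper likewise inverts $\Phi$, asserts that $S\ell_\cat{A}$ is a colimit under $TP_\cat{B}D$ \emph{and} that $T\ell_\cat{B}$ is a colimit under $SP_\cat{A}D$, and then concludes that the unique comparison morphisms in the two directions are necessarily isomorphisms. The real difference is that you make explicit the hypothesis that the second of these assertions secretly uses, and you are right to insist on it. Via the natural isomorphism $\Phi$, the claim that $T\ell_\cat{B}$ is a colimit of $SP_\cat{A}D$ is equivalent to $T$ carrying the colimit $\ell_\cat{B}$ of $P_\cat{B}D$ to a colimit of $TP_\cat{B}D$, and the colimit half of Lemma~\ref{lem:limits} assumes preservation only for $S$. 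Read literally against those hypotheses, the corollary is false, so your caution is not excessive: take $\cat{A}=\cat{B}=\cat{C}=\Set$, $S=\id$, $Tb=b\times b$, and $\cat{J}$ the discrete category on two objects. The objects $(b_i\times b_i,\, b_i,\, \id)$ lie in $\rcomma{S}{T}$, but the colimit produced by Lemma~\ref{lem:limits} has comparison map $\psi\colon (b_1\times b_1)\sqcup(b_2\times b_2)\to(b_1\sqcup b_2)\times(b_1\sqcup b_2)$, which misses the cross terms $b_1\times b_2$ and $b_2\times b_1$ and so is not an isomorphism whenever both $b_i$ are nonempty; the colimit escapes $\rcomma{S}{T}$.

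Your proposed repair---requiring that $S$ and $T$ both preserve the relevant colimits (dually, limits)---is exactly the right one, and it is satisfied in the paper's only application of this corollary: in Proposition~\ref{prop:smtlimits}, the functors $U$ and $\dum^0$ each have left and right adjoints by Proposition~\ref{prop:basics}, hence preserve all small limits and colimits. (The corollary's phrase ``with the setup of Section~\ref{sec:limits}'' is presumably a slip for ``of Lemma~\ref{lem:limits}''; one can also read it charitably as importing the two-sided-adjoint setting of that section, under which the paper's proof is sound.) So: same approach as the paper, but your version closes a genuine gap in the paper's stated hypotheses rather than introducing one.
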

\begin{proof}
In the special case that $D \colon \cat{J} \to \rcomma{S}{T}$ is a diagram in the restricted comma category, the natural transformations $\Phi$ has an inverse, $\Phi^{-1} = (\phi_j^{-1})_{j \in \cat{J}}$.
It follows that $S\ell_\cat{A}$ is a colimit under $TP_{\cat{B}}D$ and $T\ell_\cat{B}$ is a colimit under $SP_{\cat{A}}D$.
Therefore, there are unique morphisms $\psi \colon S\ell_\cat{A} \to T\ell_\cat{B}$ and $\xi \colon T\ell_\cat{B} \to S\ell_\cat{A}$ and these are necessarily isomorphisms.
Hence, $\rcomma{S}{T}$ admits colimits under $J$-shaped diagrams. 
\end{proof}

\begin{lemma}\label{lem:commaAdjoints}
Let $P_\cat{A}$ and $P_\cat{B}$ be the domain and codomain functors from $\comma{S}{T}$ to $\cat{A}$ and $\cat{B}$, respectively.
If $T$ has a left adjoint then so does $P_\cat{A}$, and if $S$ has a right adjoint so does $P_\cat{B}$.
\end{lemma}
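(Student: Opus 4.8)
The plan is to prove the first statement by exhibiting an explicit left adjoint to $P_\cat{A}$, and then to obtain the second by the dual construction. Write $L \colon \cat{C} \to \cat{B}$ for the left adjoint of $T$, so that $L \dashv T$ with unit $\eta \colon \id_\cat{C} \implies TL$. The candidate left adjoint $F \colon \cat{A} \to \comma{S}{T}$ sends an object $a$ to the triple $Fa = (a, LSa, \eta_{Sa})$; this is well-typed because $\eta_{Sa} \colon Sa \to TLSa$ is exactly a structure map $Sa \to T(LSa)$. On a morphism $f \colon a \to a'$ I would set $Ff = (f, LSf)$, and checking that the required square commutes (and that $F$ preserves identities and composition) is a routine consequence of the naturality of $\eta$.

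The heart of the argument is then to produce a bijection, natural in $a \in \cat{A}$ and in $(a',b',\phi') \in \comma{S}{T}$,
\[
  \hom{\comma{S}{T}}(Fa, (a',b',\phi')) \;\cong\; \hom{\cat{A}}(a, a').
\]
A morphism $Fa \to (a',b',\phi')$ is a pair $(f,g)$ with $f \colon a \to a'$ and $g \colon LSa \to b'$ satisfying $\phi' \circ Sf = Tg \circ \eta_{Sa}$. The key observation is that $Tg \circ \eta_{Sa}$ is precisely the adjunct of $g$ under the defining bijection $\hom{\cat{B}}(LSa, b') \cong \hom{\cat{C}}(Sa, Tb')$. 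Hence the commuting condition asserts exactly that this adjunct equals $\phi' \circ Sf$; for each choice of $f$ this pins down the adjunct of $g$, and therefore $g$ itself, uniquely. So $(f,g) \mapsto f$ is the desired bijection, and naturality in both variables is a direct diagram chase --- tedious but entirely forced.

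For the second statement I would run the dual construction. If $S$ has a right adjoint $R \colon \cat{C} \to \cat{A}$, with $S \dashv R$ and counit $\epsilon \colon SR \implies \id_\cat{C}$, define $G \colon \cat{B} \to \comma{S}{T}$ by $Gb = (RTb, b, \epsilon_{Tb})$, using that $\epsilon_{Tb} \colon SRTb \to Tb$ is a legitimate structure map. The same analysis, now recognizing the left leg $\epsilon_{Tb} \circ Sf$ of the commuting square as the adjunct of $f$ under $\hom{\cat{A}}(a', RTb) \cong \hom{\cat{C}}(Sa', Tb)$, yields a natural bijection $\hom{\comma{S}{T}}((a',b',\phi'), Gb) \cong \hom{\cat{B}}(b', b)$ and so exhibits $G$ as a right adjoint of $P_\cat{B}$. (Formally, this is the first statement applied to $\comma{T^{\mathrm{op}}}{S^{\mathrm{op}}} \cong (\comma{S}{T})^{\mathrm{op}}$, under which $P_\cat{B}$ becomes a domain functor and adjoints switch sides.)

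I expect the only real obstacle to be \emph{guessing} the correct object $Fa = (a, LSa, \eta_{Sa})$: the structure map must be built from the unit so that the comma-category square unwinds into the transpose identity of $L \dashv T$. Once this candidate is identified the rest is forced, and notably no completeness or size hypotheses on $\cat{A}$, $\cat{B}$, $\cat{C}$ are required beyond the stated adjointness.
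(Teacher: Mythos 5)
Your proposal is correct, and it is built on exactly the same key construction as the paper's proof: the candidate left adjoint sending $a$ to $(a, LSa, \eta_{Sa})$ and $f$ to $(f, LSf)$. The difference lies only in how adjointness is certified. The paper uses the unit/counit formulation: since $P_\cat{A}\tild{L} = \id_\cat{A}$ the unit is trivial, the counit is written down explicitly as $\tild{\epsilon}_{(a,b,\phi)} = (\id_a, \epsilon_b \comp L\phi)$ using the counit $\epsilon$ of $L \dashv T$, and the two triangle identities are verified by computation. You instead establish the hom-set bijection $\hom{\comma{S}{T}}(\tild{L}a,(a',b',\phi')) \iso \hom{\cat{A}}(a,a')$ directly, by recognizing the comma-square condition $\phi' \comp Sf = Tg \comp \eta_{Sa}$ as saying that the adjunct of $g$ under $L \dashv T$ equals $\phi' \comp Sf$, so $g$ is uniquely determined by $f$. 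This makes bijectivity immediate and uses only the unit of $L \dashv T$, at the cost of not exhibiting the counit of the new adjunction, which the paper's computation produces for free. On the second statement you are more explicit than the paper, which only says a similar argument applies: your dual construction $Gb = (RTb, b, \epsilon_{Tb})$, or equivalently the formal duality $(\comma{S}{T})^{\mathrm{op}} \iso \comma{T^{\mathrm{op}}}{S^{\mathrm{op}}}$, is precisely what is meant there. Both routes are sound, and the remaining naturality/functoriality checks you defer are as routine as the ones the paper likewise omits.
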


\begin{proof}
To begin, we assume that $T$ has a left adjoint $L$, with counit $\epsilon \colon LT \implies \id_{\cat{B}}$ and unit $\eta \colon \id_{\cat{C}} \implies TL$ (see~\cite[Section 4.2]{Riehl16}, for example).
Define $\tild{L} \colon \cat{A} \to \comma{S}{T}$ by
\begin{align*}
\cat{A} \ni a &\mapsto (a, LSa, \eta_{Sa}) \in \comma{S}{T} \\
\hom{\cat{A}}(a_1,a_2) \ni f &\mapsto (f, LSf ) \in \hom{\comma{S}{T}}(\tild{L}a_1,\tild{L}a_2).
\end{align*}

We claim that $\tild{L}$ is left adjoint to $P_\cat{A}$. 
Observe that $P_\cat{A}\tild{L} = \id_{\cat{A}}$ so there is trivially a unit $\tild{\eta} \colon \id_\cat{A} \implies P_\cat{A}\tild{L}$.
We need to construct a counit $\tild{\epsilon} \colon \tild{L}P_\cat{A} \implies \id_{\comma{S}{T}}$.
Define $\tild{\epsilon}_{(a,b,\phi)} = (\id_a, \epsilon_b \comp L\phi)$, and observe that the triangle identities in Figure~\ref{fig:tennisball} (middle and right) are satisfied for this definition of counit.
\begin{figure}[h]
\hspace{10mm}
\begin{tikzpicture}
    \node (rcomma) {$\comma{S}{T}$};
    \node (A) [below=0.9cm of rcomma] {$\cat{A}$};
    \node (B) [right=1.5cm of rcomma] {$\cat{B}$};
    \node (C) [below=0.945cm of B] {$\cat{C}$};
    \draw[->] (B) [bend left] to node {$T$} (C);
    \draw[->] (A) to node [swap] {$S$} (C);
    \draw[->] (rcomma) [bend left] to node {$P_\cat{A}$} (A);
    \draw[->] (rcomma) to node {$P_\cat{B}$} (B);
    \draw[->] (C) [bend left] to node {$L$} (B);
    \draw[->,dashed] (A) [bend left] to node {$\tild{L}$} (rcomma);
\end{tikzpicture}
\hspace{15mm}
\begin{tikzpicture}
\node[scale=1.1] (a) at (0,0){
\begin{tikzcd}
\tild{L} \arrow[r, Rightarrow, "\tild{L}\tild{\eta}"] \arrow[dr, Rightarrow, "\id_{\tild{L}}"']
& \tild{L}P_\cat{A}\tild{L} \arrow[d, Rightarrow, "\tild{\epsilon}\tild{L}"]\\
& \tild{L}
\end{tikzcd}
\hspace{15mm}
\begin{tikzcd}
P_\cat{A} \arrow[r, Rightarrow, "\tild{\eta}P_\cat{A}"] \arrow[dr, Rightarrow, "\id_{P_\cat{A}}"']
& P_\cat{A}\tild{L}P_\cat{A} \arrow[d, Rightarrow, "P_\cat{A}\tild{\epsilon}"]\\
& P_\cat{A}
\end{tikzcd}
};
\end{tikzpicture}
\caption{(Left) The setup of Lemma~\ref{lem:commaAdjoints}.
(Middle and Right) Triangle identities for an adjunction.}
\label{fig:tennisball}
\end{figure}
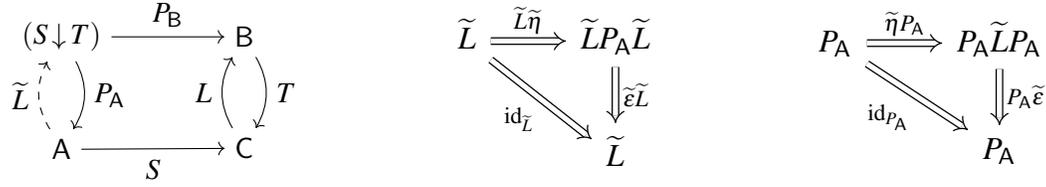

In particular, the triangle identity of Figure~\ref{fig:tennisball} (middle) is satisfied because 
\[\tild{\epsilon}_{\tild{L}a}\circ\tild{L}\tild{\eta}_a=\tild{\epsilon}_{\tild{L}a}\circ\id_{\tild{L}a}=\tild{\epsilon}_{(a,LSa,\eta_{Sa})}=(\id_a,\epsilon_{LSa}\circ L \eta_{Sa})=(\id_a,\id_{LSa})=\id_{\tild{L}a}\]
for all $a\in\cat{A}$, and the triangle identity Figure~\ref{fig:tennisball} (right) is satisfied because 
\[P_\cat{A}\tild{\epsilon}_c\circ \tild{\eta}_{P_\cat{A}c}=P_\cat{A}\tild{\epsilon}_c\circ\id_{P_\cat{A}c}=P_\cat{A} (\id_a,\epsilon_b\circ L\phi)=\id_a=\id_{P_\cat{A}c}\]
for all $c=(a,b,\phi)\in\comma{S}{T}$.

A similar argument shows that if $S$ has a right adjoint $R$ then $P_\cat{B}$ has a right adjoint $\tild{R}$.
\end{proof}

\begin{corollary}\label{cor:rcommaAdjoints}
Let $P_\cat{A}$ and $P_\cat{B}$ be the domain and codomain functors from $\rcomma{S}{T}$ to $\cat{A}$ and $\cat{B}$, respectively.
If $S$ has a left or right adjoint, then so does $P_\cat{B}$, and likewise if $T$ has a left or right adjoint, so does $P_\cat{A}$.
\end{corollary}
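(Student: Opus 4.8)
The plan is to prove all four implications by first collapsing them to a single representative case, using the symmetry of Proposition~\ref{prop:symmetry} together with formal duality, and then adapting the explicit adjoint produced in Lemma~\ref{lem:commaAdjoints} so that it lands in the iso-comma subcategory $\rcomma{S}{T}$.

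First I would exploit Proposition~\ref{prop:symmetry}. The isomorphism $\rcomma{S}{T}\cong\rcomma{T}{S}$ sends $(a,b,\phi)$ to $(b,a,\phi\inv)$, and hence carries the domain functor $P_\cat{A}$ of $\rcomma{S}{T}$ to the $\cat{A}$-valued (codomain) projection of $\rcomma{T}{S}$, and $P_\cat{B}$ to the $\cat{B}$-valued (domain) projection of $\rcomma{T}{S}$. Since an isomorphism of categories preserves the existence of left and right adjoints, every statement about $S$ and $P_\cat{B}$ over $\rcomma{S}{T}$ is precisely the corresponding statement about $T$ and $P_\cat{A}$ over $\rcomma{T}{S}$. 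Thus it suffices to prove the two implications relating an adjoint of the target functor $T$ to an adjoint of $P_\cat{A}$; exchanging the roles of $S$ and $T$ then recovers the two implications for $S$ and $P_\cat{B}$.

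Next I would reduce the right-adjoint case to the left-adjoint case by passing to opposite categories. A right adjoint of $P_\cat{A}$ is the same thing as a left adjoint of $P_\cat{A}^{\mathrm{op}}$ on $(\rcomma{S}{T})^{\mathrm{op}}$, and the iso-comma construction is self-dual: $(\rcomma{S}{T})^{\mathrm{op}}\cong\rcomma{T^{\mathrm{op}}}{S^{\mathrm{op}}}$, under which ``$T$ has a right adjoint'' becomes ``$T^{\mathrm{op}}$ has a left adjoint.'' Applying Proposition~\ref{prop:symmetry} once more to $\rcomma{T^{\mathrm{op}}}{S^{\mathrm{op}}}$ converts the assertion ``$T$ has a right adjoint $\Rightarrow P_\cat{A}$ has a right adjoint'' into an instance of ``$T$ has a left adjoint $\Rightarrow P_\cat{A}$ has a left adjoint.'' Hence everything comes down to this single implication.

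For that implication, let $L\dashv T$ with unit $\eta\colon\id_\cat{C}\implies TL$. Lemma~\ref{lem:commaAdjoints} already furnishes a left adjoint $\tild{L}$ to $P_\cat{A}$ on the \emph{full} comma category, given on objects by $\tild{L}a=(a,LSa,\eta_{Sa})$, with the triangle identities verified there. The remaining, and genuinely delicate, step is to show that this adjunction restricts to the full subcategory $\rcomma{S}{T}$: one must check that $\tild{L}$ factors through $\rcomma{S}{T}$ and that its universal property is inherited. Equivalently, I would aim to show that the inclusion $\rcomma{S}{T}\hookrightarrow\comma{S}{T}$ admits a left adjoint (a reflector) $\rho$, for then $\rho\tild{L}$ is left adjoint to the restriction of $P_\cat{A}$, and the dual argument together with the symmetry reduction above assembles all four implications. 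I expect this restriction step to be the main obstacle, since the structure morphism $\eta_{Sa}$ of $\tild{L}a$ is not invertible in general; making the argument go through requires controlling the components of the unit of $L\dashv T$ on the objects in play (verifying, in the situation at hand, that the relevant reflection into the subcategory of isomorphisms exists), which is exactly the point that must be established with care.
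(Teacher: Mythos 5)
Your two reductions are valid and, in fact, tidier than the paper's own bookkeeping: the paper invokes Lemma~\ref{lem:commaAdjoints} directly for the cases ``$T$ has a left adjoint'' and ``$S$ has a right adjoint'' and then appeals to Proposition~\ref{prop:symmetry} for the remaining two, while you use symmetry together with opposite categories to collapse all four implications into the single one ``$T$ has a left adjoint $\Rightarrow P_\cat{A}$ has a left adjoint.'' The genuine gap is that you stop precisely where a proof of that one implication has to occur: you observe (correctly) that the object $\tild{L}a=(a,LSa,\eta_{Sa})$ from Lemma~\ref{lem:commaAdjoints} need not lie in $\rcomma{S}{T}$ because $\eta_{Sa}$ is generally not invertible, and you propose to repair this with a reflector $\rho$ for the inclusion $\rcomma{S}{T}\inj\comma{S}{T}$, but you never construct $\rho$ or prove that it exists. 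As submitted, the argument establishes nothing beyond the reductions.

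Worse, the step you defer is not merely delicate; it is impossible at this level of generality, so your plan cannot be completed without new hypotheses. Let $\cat{A}=\cat{B}=\mathbf{1}$, let $\cat{C}=\mathbf{2}$ be the category with a single nonidentity arrow $c_0\to c_1$, let $S$ pick out $c_0$ and $T$ pick out $c_1$. Then $T$ has a left adjoint (since $c_1$ is terminal) and $S$ has a right adjoint (since $c_0$ is initial), but $\rcomma{S}{T}$ is empty because the arrow $c_0\to c_1$ is not invertible; hence $P_\cat{A}\colon\rcomma{S}{T}\to\mathbf{1}$ admits no adjoint on either side, and no reflector exists either, as there is no functor from the nonempty category $\comma{S}{T}$ to the empty one. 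So the corollary as stated fails in full generality, and the paper's one-line proof---which applies a lemma about $\comma{S}{T}$ to $\rcomma{S}{T}$ as if the construction automatically restricted---elides exactly the point you flagged. What saves the paper's actual application (Proposition~\ref{prop:basics}) is that there the relevant units and counits are isomorphisms: the adjoints $D_0$ and $D_\infty$ of $U$, and the trivial- and complete-complex adjoints of $\dum^0$, all leave the underlying set unchanged, so $\eta_{Sa}$ and its analogues are identities and the objects produced by Lemma~\ref{lem:commaAdjoints} do land in the restricted comma category. The correct repair is therefore to add such a hypothesis (e.g.\ that the unit of $L\dashv T$, resp.\ the counit of $S\dashv R$, is an isomorphism), not to hope for a reflector.
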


\begin{proof}
Apply Lemma~\ref{lem:commaAdjoints} to see that if $S$ has a right adjoint, then so does $P_B$, and that if $T$ has a left adjoint, then so does $P_\cat{A}$.
The case of when $S$ has a left adjoint or $T$ has a right adjoint follows after first applying Proposition~\ref{prop:symmetry}.
\end{proof}

\subsection{Simplicial Thickenings as a Comma Category}\label{ssec:simplicialThickeningComma}

To formalize simplicial metric thickenings as comma categories, we first recall the definitions of the categories of simplicial complexes and of metric spaces.

\begin{definition}\label{def:simplicialMap}
Let $K$ and $L$ be simplicial complexes with vertex sets $K^0$ and $L^0$.
A \defn{simplicial map} is a function $f \colon K^0 \to L^0$ such that if $\sigma$ is a simplex of $K$, then $f(\sigma)$ is a simplex of $L$.
\end{definition}

The category of simplicial complexes, $\scpx$, has abstract simplicial complexes as objects and simplicial maps as morphisms.
This category admits finite products and coproducts.
The categorical product of simplicial complexes $K$ and $L$, denoted $K \prod L$, is the simplicial complex such that $(\sigma,\tau) \in K \prod L$ is a simplex whenever $\sigma \in K$ and $\tau \in L$~\cite[Definition~4.25]{Kozlov2008}.
The coproduct, denoted $K \coprod L$, is the disjoint union simplicial complex.

\begin{definition}\label{def:shortFunction}
Let $X$ and $Y$ be metric spaces and $k \in [0,+\infty)$.
A function $f \colon X \to Y$ is \defn{$k$-Lipschitz} if $d(f(x),f(x')) \le k d(x,x')$ for all $x,x' \in X$.
Functions which are $1$-Lipschitz may be called \defn{short}.
\end{definition}

Lipschitz functions are, of course, continuous.
We define the category of metric spaces, $\met$, to have metric spaces as objects and short maps as morphisms.
While this is a standard definition (it is the same used in~\cite{FritzPerrone2019}, for example), there are alternative definitions in the literature, where either the morphisms are less-restricted, or the axioms of a metric space are relaxed~\cite{Lawver1979}.
In particular, the morphisms may be allowed to be maps which are $k$-Lipschitz for some $k \in [0,\infty)$, or simply continuous maps.
The latter is the structure of the category of metric spaces as a full subcategory of $\Top$.
Many of our constructions do not depend on the choice of morphisms for $\met$, but our default choice in this paper is short maps.

The metric space axioms may also be relaxed when defining a category of metric spaces.
Recall that the classical definition of a metric space is a set $X$ equipped with a function $d(\cdot,\cdot) \colon X \times X \to [0,\infty)$ such that 
$d(x,y) = 0$ if and only if $x=y$,
$d(x,y) = d(y,x)$ for all $x,y \in X$, and
$d(x,z) \le d(x,y) + d(y,z)$ for all $x,y,z \in X$.
Allowing $d(x,y) = \infty$ gives an \defn{extended metric space}. 
Allowing $d(x,y) = 0$ when $x \neq y$ gives a \defn{pseudo-metric space}.
Allowing $d(x,y) \neq d(y,x)$ is a \defn{quasi-metric space}.
Combining all of the three above relaxations gives Lawvere metric spaces, or categories enriched in the monoidal poset $([0,+\infty],\le,+)$.

We will make use of classical metric spaces and of extended pseudo-metric spaces, denoting the category of the latter by $\pmet$.
Of course, $\met$ is a full subcategory of $\pmet$.

The category $\met$ has finite products.
If $X$ and $Y$ are metric spaces, the product $X \times Y$ is the cartesian product of the underlying sets with the supremum norm:
$d((x,y),(x',y')) = \max\{d(x,x'),d(y,y')\}$.
Coproducts do not exist in $\met$; however, colimits under certain other diagrams do, including the wedge sum discussed in Section~\ref{sec:gluings}.


One advantage of $\pmet$ is the existence of arbitrary products and coproducts.
The product is defined 
using the supremum metric, and the coproduct $X \coprod Y$ is the set $X \disj Y$ with $d(x,y) = +\infty$ for $x \in X$ and $y \in Y$ (all other distances are unchanged).
The necessity of working in $\pmet$ for arbitrary products is shown by the following example (see~\cite[Chapter~2, Example~1.9]{manes2012algebraic} for a formal proof that arbitrary products may not exist in $\met$).
Consider the space $X = \R^{\N}$ (that is, sequences of real numbers) with the supremum norm.
The distance between $x = (0,0,\ldots,0,\ldots)$ and $y = (0,1,2,\ldots,n,\ldots)$ is then $d(x,y) = \sup_{n \in \N} n = \infty$.
All the other axioms of the metric are still satisfied by supremums taken over infinite sets, however.

Note that both the categories of metric spaces and simplicial complexes possess canonical functors to $\Set$.
For metric spaces, the functor $U$ is given by forgetting the metric $d$,
\begin{align*}
U \colon \met \ni (X,d) &\mapsto X \in \Set \\
f \colon (X,d_X) \to (Y,d_Y) &\mapsto f \colon X \to Y
\end{align*}
For abstract simplicial complexes, the functor $\dum^0$ is given by forgetting the subset structure, 
\begin{align*}
\dum^0 \colon \scpx \ni K&\mapsto K^0 \in \Set \\
f \colon K \to L &\mapsto f|_{K^0} \colon K^0 \to L^0
\end{align*}
Here $K^0$ and $L^0$ are the vertex sets of the simplicial complexes $K$ and $L$.
We will often not refer to $U$ and $\dum^0$ explicitly and instead write $X$ or $K^0$ to refer to the underlying sets.
\\

\noindent
\begin{minipage}{.8\textwidth}
\begin{definition}\label{def:categorySimplicialMetricThickenings}
The category $\smt$ of simplicial metric thickenings is the restricted comma category $\rcomma{U}{\dum^0}$.
Explicitly, objects are triples $(X,K,\phi)$, in which $X$ is a metric space, $K$ is an abstract simplicial complex, and $\phi \colon K^0 \to X$ is an isomorphism of sets, and a morphism between $(X,K,\phi)$ and $(Y,L,\psi)$ is a pair of short maps $(f \colon X \to Y, g \colon K \to L)$ such that the following diagram commutes in $\Set$\emph{:}
\end{definition}
\end{minipage}
\begin{minipage}{.2\textwidth}
\centering
\tikzset{node distance=1.5cm}
\begin{tikzpicture}
   \node (X) {$X$};
   \node (K) [below of=X] {$K^0$};
   \node (Y) [right of=X] {$Y$};
   \node (L) [below of=Y] {$L^0$};
   \draw[->] (X) to node [swap] {$\phi$} (K);
   \draw[->] (X) to node {$f$} (Y);
   \draw[->] (K) to node [swap]  {$g|_{K^0}$} (L);
   \draw[->] (Y) to node {$\psi$} (L);
\end{tikzpicture}
\end{minipage}
\\

Note that the source category of $U$ can be either $\met$ or $\pmet$, to distinguish we use $\smt$ and $\psmt$.
Next, we establish some basic properties of the category of simplicial metric thickenings.

\begin{proposition}\label{prop:basics}
The domain and codomain functors $P_\pmet \colon \psmt \to \pmet$ and $P_\scpx \colon \psmt \to \scpx$ both have left and right adjoints.
In addition, the functor $P_\met$ also defines a functor $\smt \to \met$ with left and right adjoints.
\end{proposition}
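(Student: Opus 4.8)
The plan is to reduce the entire statement to Corollary~\ref{cor:rcommaAdjoints} by exhibiting left and right adjoints for the two functors $S = U$ and $T = \dum^0$ out of which $\smt = \rcomma{U}{\dum^0}$ (and likewise $\psmt$) is built. Recall that the corollary asserts that an adjoint of $T$ on either side produces an adjoint of the domain functor $P_\cat{A}$, and an adjoint of $S$ on either side produces an adjoint of the codomain functor $P_\cat{B}$. Matching notation, $P_\cat{A}$ is the metric-space functor ($P_\met$ or $P_\pmet$) and $P_\cat{B} = P_\scpx$. Thus it suffices to produce the relevant adjoints of $\dum^0$ and of $U$ and verify their universal properties; the corollary then delivers the conclusion.

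First I would treat $\dum^0 \colon \scpx \to \Set$. Its left adjoint $D$ sends a set $V$ to the discrete simplicial complex on $V$, having vertex set $V$ and no higher simplices; since the only simplices of $DV$ are its vertices, every function $V \to L^0$ is automatically a simplicial map, giving a natural bijection $\hom{\scpx}(DV, L) \cong \hom{\Set}(V, L^0)$. Its right adjoint $E$ sends $V$ to the full simplex on $V$, in which every finite subset is a simplex; then the image of any simplex under any function $K^0 \to V$ is again a simplex, so $\hom{\scpx}(K, EV) \cong \hom{\Set}(K^0, V)$. Because the adjoints of $\dum^0$ live entirely on the combinatorial side and are insensitive to whether the source of $U$ is $\met$ or $\pmet$, Corollary~\ref{cor:rcommaAdjoints} immediately yields that both $P_\pmet \colon \psmt \to \pmet$ and $P_\met \colon \smt \to \met$ possess left and right adjoints.

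Next I would treat $U \colon \pmet \to \Set$. The left adjoint $F$ equips a set $V$ with the extended metric in which distinct points lie at distance $+\infty$; every function $V \to UY$ is then short, so $\hom{\pmet}(FV, Y) \cong \hom{\Set}(V, UY)$. The right adjoint $G$ equips $V$ with the zero pseudo-metric, so that every function $UY \to V$ yields a short map $Y \to GV$ (all distances in $GV$ vanish); this gives $\hom{\pmet}(Y, GV) \cong \hom{\Set}(UY, V)$. With both adjoints of $U$ in hand, Corollary~\ref{cor:rcommaAdjoints} gives that $P_\scpx \colon \psmt \to \scpx$ has left and right adjoints, completing the argument.

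The main point to watch is exactly why the codomain functor is claimed only for $\psmt$ and not for $\smt$: the two adjoints of $U$ require an infinite-distance metric and a zero pseudo-metric, neither of which is a classical metric space. Hence $U \colon \met \to \Set$ admits neither adjoint, and Corollary~\ref{cor:rcommaAdjoints} cannot be invoked to give adjoints of a codomain functor $\smt \to \scpx$. Apart from this bookkeeping and the need to pass to $\pmet$ on the metric side, every verification above is a routine check of the displayed hom-set bijections.
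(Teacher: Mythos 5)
Your proposal is correct and follows essentially the same route as the paper's own proof: both reduce to Corollary~\ref{cor:rcommaAdjoints}, exhibit the trivial and complete complex functors as the left and right adjoints of $\dum^0$, exhibit the $\infty$-discrete and zero pseudo-metric functors (the paper's $D_\infty$ and $D_0$) as the left and right adjoints of $U$, and note that the latter pair exists only over $\pmet$, which is why $P_\scpx$ has adjoints only for $\psmt$. The only difference is that you spell out the hom-set bijections explicitly, which the paper leaves implicit.
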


\begin{proof}
As per Corollary~\ref{cor:rcommaAdjoints}, we only need to show that $U$ and $\dum^0$ have adjoints.
Starting with $\dum^0$, the right adjoint is the complete simplicial complex functor, $C$, and the left adjoint is the trivial complex functor, $T$.

Let $D_r \colon \Set \to \pmet$ be the functor giving every set the discrete metric where all distances are equal to $r$.
The right adjoint of $U$ is $D_0$ and the left adjoint is $D_\infty$.
These are not defined for $\met$, and so $P_\scpx$ has adjoints only in $\psmt$, and not in $\smt$.
\end{proof}

Note that $\scpx$ and $\met$ can both be embedded into $\smt$.
Choosing some $r$, the functor $D_r \colon \scpx \to \smt$ is a full and faithful embedding, and the functors $T \colon \met \to \smt$ and $C \colon \met \to \smt$ are full and faithful embeddings.

\begin{proposition}\label{prop:smtlimits}
If $\pmet$ and $\scpx$ each admit (co)limits over small diagrams of shape $\cat{J}$, then so does $\psmt$.
If $\met$ and $\scpx$ each posses limits over small diagrams of shape $\cat{J}$, then so does $\smt$.
In particular, $\psmt$ admits finite products and coproducts, and $\smt$ admits finite products.
\end{proposition}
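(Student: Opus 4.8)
The plan is to present both assertions as instances of the comma-category results already in hand, namely Lemma~\ref{lem:limits} and Corollary~\ref{cor:restrictedlimits}, once we feed in the adjunctions recorded in Proposition~\ref{prop:basics}. Recall that $\psmt$ and $\smt$ are the restricted comma categories $\rcomma{U}{\dum^0}$ built from the forgetful functor $U$ (out of $\pmet$, respectively $\met$) and the vertex functor $\dum^0 \colon \scpx \to \Set$. The engine of the argument is the standard fact that a functor possessing a right adjoint preserves all colimits, while a functor possessing a left adjoint preserves all limits.

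I would first dispatch $\psmt$. From (the proof of) Proposition~\ref{prop:basics}, the functor $U \colon \pmet \to \Set$ has the right adjoint $D_0$, hence preserves colimits, and $\dum^0$ has the left adjoint $T$, hence preserves limits. Thus, whenever $\pmet$ and $\scpx$ admit $\cat{J}$-shaped colimits, the colimit half of Lemma~\ref{lem:limits} (applied with source functor $S = U$) produces $\cat{J}$-shaped colimits in the full comma category $\comma{U}{\dum^0}$; and whenever they admit $\cat{J}$-shaped limits, the limit half (applied with target functor $T = \dum^0$) does the same for limits. A diagram landing in $\psmt = \rcomma{U}{\dum^0}$ has all of its structure morphisms isomorphisms, so Corollary~\ref{cor:restrictedlimits} applies and the (co)limit just produced again lies in---and is a (co)limit of the diagram in---the restricted comma category $\psmt$. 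This settles the first claim.

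The treatment of $\smt$ is identical on the limit side and must be abandoned on the colimit side, and I would make this asymmetry explicit. The functor $\dum^0$ does not change when we pass from $\pmet$ to $\met$, so it still has the left adjoint $T$ and still preserves limits; hence, given $\cat{J}$-shaped limits in $\met$ and $\scpx$, the limit half of Lemma~\ref{lem:limits} together with Corollary~\ref{cor:restrictedlimits} delivers the corresponding limit in $\smt$. The colimit argument, by contrast, breaks down precisely because the adjoints $D_0$ and $D_\infty$ of $U$ take values outside $\met$ (as observed after Proposition~\ref{prop:basics}), so $U \colon \met \to \Set$ is no longer guaranteed to preserve colimits---in keeping with the fact that $\met$ has no coproducts.

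Finally, the ``in particular'' clauses come from specializing $\cat{J}$ to a finite discrete category, under which limits are finite products and colimits are finite coproducts. Since $\pmet$ and $\scpx$ each have finite products and finite coproducts, both descend to $\psmt$; since $\met$ has finite products but no coproducts while $\scpx$ has both, only products descend to $\smt$. I do not anticipate a genuine obstacle here: the whole argument is bookkeeping on top of Lemma~\ref{lem:limits} and Corollary~\ref{cor:restrictedlimits}, and the only point deserving care is confirming that a (co)limit computed in $\comma{U}{\dum^0}$ and shown by Corollary~\ref{cor:restrictedlimits} to lie in the full subcategory $\rcomma{U}{\dum^0}$ really does serve as the (co)limit there, which is exactly what fullness of the inclusion guarantees.
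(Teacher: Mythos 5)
Your proposal is correct and takes essentially the same route as the paper: both arguments feed the adjunctions of Proposition~\ref{prop:basics} into Lemma~\ref{lem:limits} and Corollary~\ref{cor:restrictedlimits} to transfer (co)limits to the restricted comma category. If anything, your write-up is more careful than the paper's terse three-line proof, since you make explicit why the colimit half must be dropped for $\smt$ (the adjoints $D_0$ and $D_\infty$ of $U$ take values outside $\met$), a point the paper leaves implicit.
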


\begin{proof}
As described in Proposition~\ref{prop:basics}, $\dum^0$ and $U$ both have left and right adjoints.
Therefore, both are continuous and cocontinuous functors, i.e., they preserve small limits and colimits.
By Corollary~\ref{cor:restrictedlimits}, $\rcomma{U}{\dum^0}$ has limits of any small diagram for which limits exist in both $\met$ and $\scpx$.
\end{proof}

\subsection{The Metric Realization Functor}\label{ssec:metricRealizationFunctor}

Here we show that every object of $\psmt$ can be realized as a space satisfying Definition~\ref{def:simplicialMetricThickening}.
We call this realization the \defn{metric realization} of the simplicial metric thickening.
It was first introduced in~\cite{AdamaszekAdamsFrick2018} and is related to~\cite{FritzPerrone2019}.
Much like the convention for geometric realizations of a simplicial complexes, we will often not distinguish between an object of $\smt$ and its metric realization. 

As a point of comparison, there is a functor $|\dum| \colon \scpx \to \Top$ that takes a simplicial complex $K$ to a topological space $|K|$ called the geometric realization.
While simplicial thickenings could be given a topology using $|\dum|$ and factoring through $P_\scpx$, the metric realization functor provides a more direct topological realization with better properties due to the metric structure.
As described in~\cite{AdamaszekAdamsFrick2018}, the metric thickening of a simplicial thickening $(X,K,\phi)$ in which $K$ is locally-finite is always homeomorphic to the geometric realization of the simplicial complex $|K|$.
However, geometric realizations of non-locally-finite complexes are non-metrizable, so the metric thickening topology is necessarily different.

To define the metric realization, we need a certain number of measure-theoretic definitions.
If $X$ is a metric space, we will consider it a measurable space with its Borel $\sigma$-algebra.
Given a point $x \in X$, let $\deltam{x}$ denote the delta distribution with mass one centered at the point $x$.
By a probability measure on $X$ we mean a Radon measure $\mu$ such that $\mu(X) = 1$.
We will furthermore assume that probability measures have finite moments, meaning that for any fixed $x' \in X$ and $p \in [1,+\infty)$, we have
$\int_X d_X(x,x')^p \dd\mu(x) < \infty$.
Note that any measure with finite support and total mass one is a probability Radon measure with finite moments.
Denote the set of all probability Radon measures with finite moments on $X$ by $\P{X}$.
Recall also that the support of a measure $\mu$ is the (closed) set
\[\supp(\mu) = \{ x \in X \st \mu(A) > 0 \text{ for all open } A \ni x \} .\]

The technical restrictions on the measures in $\P{X}$ are necessary for $\P{X}$ to be a metric space under the Wasserstein (also Kantorovich or earth-movers) distance.

\begin{definition}\label{def:WassersteinMetric}
Let $X$ be a metric space and let $\mu$, $\nu$ be probability measures on $X$.
Let $\Gamma(\mu,\nu)$ be the set of all measures $\pi$ on $X \times X$ such that $\pi(X,A) = \nu(A)$ and $\pi(B,X) = \mu(B)$ for all measurable sets $A$ and $B$ (that is, all measures whose marginals are $\mu$ and $\nu$).
The \defn{$p$-Wasserstein distance} between $\mu$ and $\nu$ is 
\[ d(\mu,\nu) \defeq \inf_{\pi \in \Gamma(\mu,\nu)} \left( \int_{X \times X} d_X(x,y)^p \dd\pi \right)^{1/p} .\]

\end{definition}
For more details on the Wasserstein distance, including the fact that it defines a metric on $\P X$ and that all choices of $p$ are topologically equivalent, see~\cite{AdamaszekAdamsFrick2018,Edwards2011,Kellerer1984,Kellerer1982,Villani2003,Villani2008}.

We now have the requisite machinery to define the metric realization of a simplicial metric thickening.

\begin{definition}\label{def:metricRealizationFunctor}
The \defn{metric realization functor} $\mt{\dum} \colon \psmt \to \pmet$ is specified by the following data:
\begin{itemize}[topsep=0pt,itemsep=-1ex,partopsep=1ex,parsep=1ex]
\item For each simplicial thickening $\m{K} = (X,K,\phi)$ in $\psmt$, let $\mt{\m{K}}$ be the sub-metric space of $\P{X}$ of all probability measures $\mu$ such that $\phi(\supp(\mu)) = \sigma$ for some $\sigma \in K$.
\item For each morphism $(f,g) \colon (X,K,\phi) \to (Y,L,\psi)$, let $\mt{(f,g)}$ be the morphism taking $\mu = \sum_{i=1}^n \lambda_i \deltam{x_i}$ to $\mt{f}(\mu) = \sum_{i=1}^n \lambda_i \deltam{f(x_i)}$.
\end{itemize}
\end{definition}
Note that this also restricts to a functor $\mt{\dum} \colon \smt \to \met$.
There is no difficulty in allowing pseudo-metric spaces here, even though many references only treat classical metric spaces.
If $X$ contains some point $x'$ with $d(x',x) = \infty$ for some $x$ (and hence all $y$ within finite distance of that $x$), then no measure with $x,x' \in\supp(\mu)$ is in $\P{X}$ due to the finite moments condition.
Pseudo-metric spaces also have a natural topology and a well-defined Borel $\sigma$-algebra, so $\P{X}$ is defined for such spaces.

The objects here are precisely those described by Definition~\ref{def:simplicialMetricThickening}.
Indeed, for finitely-supported measures, we have $\nu \ac \mu$ if and only if $\supp(\nu) \subseteq \supp(\mu)$.
Therefore the morphisms are precisely functions $f \colon X \to Y$ between metric spaces such that the pushforward map $f_\# \colon \mt{\m{K}} \to \P{Y}$ has its image contained in $\mt{\m{L}}$.
This holds for any of the variants of categories of metric spaces described in Section~\ref{ssec:simplicialThickeningComma}, though in the following we always take $\met$ or $\pmet$ with short maps as morphisms.

As described earlier, the Vietoris--Rips complex provides a natural example of the construction of simplicial thickenings.
The above definitions allow us to describe the Vietoris--Rips complex as a functor:
\begin{definition}\label{def:VietorisRipsFunctor}
Let $r \in [0,+\infty]$.
The \defn{Vietoris--Rips functor} $\vr{\dum}{r} \colon \met \to \smt$ is defined by
\begin{align*}
\vr{\dum}{r} \colon \met \ni X &\mapsto (X,\vr{X}{r},\id) \\
f \colon X \to Y &\mapsto (f,f)
\end{align*}
\end{definition}
This is well-defined because $f$ is a short map and therefore sends any simplex $\sigma$ to a set of points with no larger diameter.
The \defn{Vietoris--Rips simplicial thickening} is the composition of functors $\vrm{\dum}{r}$.

A related construction is the \v{C}ech complex functor.
In a metric space $X$, we let $B_r(x)$ denote the ball of radius $r$ centered at the point $x\in X$.
\begin{definition}\label{def:CechComplex}
Let $r \in [0,+\infty]$ and let $X$ be a set.
The \defn{\v{C}ech complex}, $\cech{X}{r}$, has a simplex for every finite subset $\sigma \subseteq X$ such that $\cap_{x \in \sigma} B_r(x) \neq \emptyset$.
The \defn{\v{C}ech functor} $\cech{\dum}{r} \colon \met \to \smt$ is defined by
\begin{align*}
\cech{\dum}{r} \colon \met \ni X &\mapsto (X,\cech{X}{r},\id) \\
f \colon X \to Y &\mapsto (f,f)
\end{align*}
\end{definition}
Again, the \defn{\v{C}ech simplicial thickening} is the composition $\cechm{\dum}{r}$.
We will study both of these constructions further in Section~\ref{sec:limits}.

\section{Metric Thickenings and Limit Operations}\label{sec:limits}

Vietoris--Rips and \v{C}ech simplicial complexes preserve certain homotopy properties under products and wedge sums.
Indeed, the case of ($L^\infty$) products is given in~\cite[Proposition~10.2]{AdamaszekAdams2017},~\cite{gakhar2019k},~\cite{lim2020vietoris} and the case of wedge sums is given in~\cite{adamaszek2017vietoris,adamaszek2020homotopy,chacholski2020homotopical,lesnick2020quantifying}.

In this section we give categorical proofs for metric thickenings.
We have seen that if $\met$ and $\scpx$ have (co)limits of a certain shape, then so does $\smt$.
We now prove that certain (co)limits are preserved by the metric thickening functors $\mt{\dum}$, $\vrm{\dum}{r}$, and $\cechm{\dum}{r}$, at least up to homotopy type.

\subsection{Metric Thickenings of Products}

We begin with the product operation.
The deformation retraction we construct corresponds to the map sending a measure on a product space to the product measure of its corresponding marginals.

We use $\times$ to denote the product in $\met$ and $\scpx$, and $\prod$ for the product in $\smt$.
Since products exist in both $\met$ and $\scpx$, they exist in $\smt$ by Proposition~\ref{prop:smtlimits}.
Explicitly, the product of $\m{M} = (X,K,\phi)$ and $\m{N} = (Y,L,\psi)$ is $\m{M} \prod \m{N} = (X\times Y, K \times L, \phi \times \psi)$.

\begin{proposition}\label{prop:product}
For any simplicial metric thickenings $M$ and $N$, the metric realization factors over the product up to homotopy: 
\[\mtm{M} \times \mtm{N} \homt \mtm{\left(M \textstyle{\prod} N \right)} .\]
\end{proposition}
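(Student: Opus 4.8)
The plan is to build an explicit homotopy equivalence out of the product-measure map and the marginal map. Write the objects as $\m M = (X,K,\phi)$ and $\m N = (Y,L,\psi)$, so that $\m M \prod \m N = (X \times Y, K \times L, \phi\times\psi)$, where $X \times Y$ carries the supremum metric and $K \times L$ is the categorical product of simplicial complexes. First I would define $\bc \colon \mtm M \times \mtm N \to \mtm{(M\prod N)}$ by $\bc(\mu,\nu) = \mu \tens \nu$, and $\bcs \colon \mtm{(M\prod N)} \to \mtm M \times \mtm N$ by $\bcs(\rho) = ((\pi_X)_\#\rho, (\pi_Y)_\#\rho)$, the pair of marginals along the coordinate projections $\pi_X,\pi_Y$. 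These land in the correct spaces on the nose: if $\supp\mu$ and $\supp\nu$ correspond to simplices $\sigma \in K$ and $\tau \in L$, then $\supp(\mu\tens\nu) = \supp\mu \times \supp\nu$ corresponds to $\sigma\times\tau$, which is a simplex of $K\times L$ by the definition of the categorical product; dually, the support of each marginal is a projection of $\supp\rho$, hence corresponds to a face of the relevant simplex.

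Next I would check continuity. The map $\bcs$ is short: each projection $\pi_X \colon X\times Y \to X$ is short for the supremum metric, and pushforward along a short map is short for the Wasserstein metric, so $\bcs$ is short into the supremum-metric product $\mtm M \times \mtm N$. For $\bc$, tensoring optimal couplings of $(\mu,\mu')$ and $(\nu,\nu')$ yields a coupling of $\mu\tens\nu$ and $\mu'\tens\nu'$, and the elementary inequality $\max(a,b)^p \le a^p + b^p$ gives $d(\mu\tens\nu,\mu'\tens\nu')^p \le d(\mu,\mu')^p + d(\nu,\nu')^p$, so $\bc$ is Lipschitz. One composite is immediate: the marginals of a product measure recover its factors, so $\bcs\bc = \id$.

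It remains to connect $\bc\bcs$ to the identity, and here I would use the straight-line homotopy $H(\rho,t) = (1-t)\rho + t\bigl((\pi_X)_\#\rho \tens (\pi_Y)_\#\rho\bigr)$. The conceptual key is that this never leaves the thickening: for every $t \in [0,1]$ one has $\supp H(\rho,t) \subseteq \pi_X(\supp\rho) \times \pi_Y(\supp\rho)$, which corresponds to a product simplex $\sigma\times\tau \in K \times L$; moreover $H(\rho,t) = \rho$ whenever $\rho$ is already a product measure, so $H$ is a strong deformation retraction of $\mtm{(M\prod N)}$ onto $\im\bc \cong \mtm M \times \mtm N$, giving the claimed equivalence $\mtm M \times \mtm N \homt \mtm{(M\prod N)}$.

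The step I expect to be the main obstacle is verifying that $H$ is jointly continuous in $\rho$ and $t$ for the Wasserstein metric. I would handle this with a transport estimate: writing $\rho_\times := (\pi_X)_\#\rho \tens (\pi_Y)_\#\rho$ and coupling $\rho$ with $(1-t)\rho + t\rho_\times$ by interpolating the diagonal (identity) coupling with an optimal coupling of $\rho$ and $\rho_\times$ gives $d(\rho, (1-t)\rho + t\rho_\times)^p \le t\, d(\rho,\rho_\times)^p$, and relatives of this bound (varying $\rho$ and combining with the Lipschitz estimates for $\bc$ and $\bcs$) establish joint continuity. In summary, the support computation is the geometric heart of the argument, and it is exactly where the categorical product of simplicial complexes is used, while the Wasserstein continuity of the linear interpolation is its technical heart.
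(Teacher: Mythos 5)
Your proposal is correct and follows essentially the same route as the paper: the same product-measure map and marginal map, the observation that marginalization-then-product is the identity one way, and the same straight-line homotopy whose image stays inside the product simplex $\pi_X(\supp\rho)\times\pi_Y(\supp\rho)$ of $K\times L$. The only differences are cosmetic: you prove Wasserstein continuity of the maps and of the homotopy directly via coupling estimates, where the paper cites~\cite[Lemma~3.9]{AdamaszekAdamsFrick2018}, and your support computation states the (correct) containment $\supp H(\rho,t)\subseteq \pi_X(\supp\rho)\times\pi_Y(\supp\rho)$ rather than the paper's slightly misstated inclusion $\supp(\iota\comp\rho(\mu))\subseteq\supp(\mu)$.
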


\begin{proof}
Let $\m{M} = (X,K,\phi)$ and $\m{N} = (Y,L,\psi)$.
Elements of $\mtm{M}$ are finitely-supported measures of the form $\mu = \sum_i \lambda_i \deltam{x_i}$ with $x_i \in X$ and $\phi(\supp(\mu))\in K$.
Likewise elements of $\mtm{N}$ have the form $\nu = \sum_j \eta_j\deltam{y_j}$ with $y_j \in Y$ and $\psi(\supp(\nu))\in L$.
Thus elements of $\mtm{M} \times \mtm{N}$ are pairs $(\mu,\nu) = (\sum_i \lambda_i \deltam{x_i} , \sum_j \eta_j\deltam{y_j})$ with $\phi(\supp(\mu))\times\psi(\supp(\nu))\in K\times L$.
On the other hand, elements of $\mtm{\left(M \prod N \right)}$ are measures on $X \times Y$ of the form $\sum_{i,j} \alpha_{i,j} \deltam{(x_i,y_j)}$.

With this in mind, there is is an obvious injection $\iota \colon \mtm{M} \times \mtm{N} \inj \mtm{\left(M \prod N \right)}$ via 
\[ \left( \sum_i \lambda_i \deltam{x_i} \, , \sum_j \eta_j\deltam{y_j} \right) \mapsto \sum_{i,j} \lambda_i \eta_j\deltam{(x_i,y_j)} .\]
Concretely, $\iota$ sends a pair of measures on $X$ and $Y$ to their product measure on $X \times Y$.
There is also a surjection $\rho \colon  \mtm{\left(M \prod N \right)} \surj \mtm{M} \times \mtm{N}$ given by taking the marginals of the joint distribution:
\[ \sum_{i,j} \alpha_{i,j} \deltam{(x_i,y_j)} \mapsto \left( \sum_i {\textstyle \left( \sum\limits_j \alpha_{i,j} \right) } \deltam{x_i} \, , \sum_{j} {\textstyle \left( \sum\limits_i \alpha_{i,j} \right) \deltam{y_j} } \right) . \]

We now show that $\iota$ and $\rho$ are homotopy inverses.
Certainly $\rho \comp \iota = \id$ by construction.
Note that the composition $\iota \comp \rho$ gives the map
\[ \sum_{i,j} \alpha_{i,j} \deltam{(x_i,y_j)} \mapsto \sum_{i,j} \left( \sum_i \alpha_{i,j} \right)\left( \sum_j \alpha_{i,j}\right) \deltam{(x_i,y_j)} . \]
This is homotopic to the identity on $\mtm{\left(M \prod N \right)}$ via the straight-line homotopy $H \colon \mtm{\left(M \prod N \right)} \times I \to  \mtm{\left(M \prod N \right)}$ where
$H(t,\mu) = t\id(\mu) + (1-t)\iota \comp \rho(\mu)$.
This is clearly well-defined as a map to $\P(X \times Y)$.
To see that the image of $H$ is in $\mtm{\left(M \prod N \right)}$, note that $\supp(\iota \comp \rho(\mu)) \subseteq \supp(\mu)$, so the entire homotopy takes place within a simplex of $K\times L$.
It then follows from~\cite[Lemma~3.9]{AdamaszekAdamsFrick2018} that homotopy $H$ is continuous.
\end{proof}

\begin{corollary}\label{cor:vr-product}
For any metric spaces $X$ and $Y$, the product operation factors through the metric Vietoris--Rips and \v{C}ech thickenings up to homotopy:
\begin{align*}
\vrm{X \times Y}{r} &\homt \vrm{X}{r} \times \vrm{Y}{r}\\
\cech{X \times Y}{r} &\homt \cech{X}{r} \times \cech{Y}{r}.
\end{align*}
\end{corollary}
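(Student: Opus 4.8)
The plan is to deduce both equivalences directly from Proposition~\ref{prop:product} by identifying the Vietoris--Rips (resp.\ \v{C}ech) object of a product space with the $\smt$-product of the two factor objects. Recall from the discussion preceding Proposition~\ref{prop:product} that the product in $\smt$ of $\vr{X}{r}=(X,\vr{X}{r},\id)$ and $\vr{Y}{r}=(Y,\vr{Y}{r},\id)$ is $(X\times Y,\ \vr{X}{r}\times\vr{Y}{r},\ \id)$, where $X\times Y$ carries the supremum ($L^\infty$) metric and $\times$ on simplicial complexes is the categorical product. Thus it suffices to establish the \emph{object-level} identity $\vr{X\times Y}{r}=\vr{X}{r}\prod\vr{Y}{r}$ in $\smt$ (same metric space, same simplicial complex); applying $\mt{\dum}$ and Proposition~\ref{prop:product} with $M=\vr{X}{r}$, $N=\vr{Y}{r}$ then gives $\vrm{X\times Y}{r}\homt\vrm{X}{r}\times\vrm{Y}{r}$, and symmetrically for \v{C}ech.

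For the Vietoris--Rips case, the combinatorial heart is the identity $\diam_{L^\infty}(S)=\max\{\diam_X(\pi_X S),\,\diam_Y(\pi_Y S)\}$ for any finite $S\subseteq X\times Y$, where $\pi_X,\pi_Y$ are the coordinate projections. First I would observe that, by definition of the sup metric, $\sup_{(x,y),(x',y')\in S} d_{\infty}((x,y),(x',y'))$ splits as the max of the two coordinate-wise suprema, since $\sup$ and $\max$ commute; second, that each coordinate supremum, e.g.\ $\sup_{(x,y),(x',y')\in S} d_X(x,x')$, equals $\diam_X(\pi_X S)$ because every element of $\pi_X S$ occurs as a first coordinate of a point of $S$. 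Consequently $\diam_{L^\infty}(S)\le r$ iff $\diam_X(\pi_X S)\le r$ and $\diam_Y(\pi_Y S)\le r$, which is exactly the condition that $S$ be a simplex of the categorical product $\vr{X}{r}\times\vr{Y}{r}$ (a subset whose projections are simplices in each factor). This proves the object-level identity.

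For the \v{C}ech case I would argue similarly, replacing the diameter computation by a factorization of balls. In the $L^\infty$ metric one has $B_r((x,y))=B_r(x)\times B_r(y)$, so for finite $S\subseteq X\times Y$ the common intersection factors as
\[
\bigcap_{(x,y)\in S} B_r\big((x,y)\big)=\Big(\bigcap_{x\in \pi_X S} B_r(x)\Big)\times\Big(\bigcap_{y\in \pi_Y S} B_r(y)\Big),
\]
which is nonempty iff both factors are nonempty. Hence $S$ is a simplex of $\cech{X\times Y}{r}$ iff its projections are simplices of $\cech{X}{r}$ and $\cech{Y}{r}$, giving $\cech{X\times Y}{r}=\cech{X}{r}\prod\cech{Y}{r}$ in $\smt$, and the homotopy equivalence of thickenings again follows from Proposition~\ref{prop:product}.

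I expect the only real subtlety to be the verification that the categorical product of simplicial complexes matches the Vietoris--Rips (resp.\ \v{C}ech) complex of the product, and in particular that this hinges essentially on using the supremum metric on $X\times Y$: it is precisely the $L^\infty$ norm that makes both $\diam$ and the balls decouple across coordinates, and the identity would fail for the $L^1$ or $L^2$ product metrics. Once the object-level identities are in hand, the remainder is immediate, since Proposition~\ref{prop:product} does all the homotopy-theoretic work and no further analysis of the deformation retraction is needed.
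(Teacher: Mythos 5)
Your proposal is correct and follows essentially the same route as the paper's proof: establish the simplicial-complex-level isomorphisms $\vr{X\times Y}{r}\iso\vr{X}{r}\prod\vr{Y}{r}$ (via the $L^\infty$ diameter decoupling) and $\cech{X\times Y}{r}\iso\cech{X}{r}\prod\cech{Y}{r}$ (via the factorization of $L^\infty$ balls), then apply Proposition~\ref{prop:product}. The only difference is that you spell out the details the paper leaves implicit, which is fine.
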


\begin{proof}
As simplicial complexes, we have an isomorphism $\vr{X \times Y}{r} \iso \vr{X}{r} \prod \vr{Y}{r}$ since with the $L^\infty$ metric, a subset of $X\times Y$ has diameter equal to the maximum of the diameters of its coordinate projections.
Similarly, we have an isomorphism $\cech{X\times Y}{r} \iso \cech{X}{r}\prod\cech{Y}{r}$ of \v{C}ech simplicial complexes since a collection of $L^\infty$ balls intersect if and only if their projections onto both factors intersect.
Thus $\vrm{X \times Y}{r} \iso \mt{\left(\vr{X}{r} \prod \vr{Y}{r}\right)}$ and $\cechm{X \times Y}{r} \iso \mt{\left(\cech{X}{r} \prod \cech{Y}{r}\right)}$.
The result then follows from Proposition~\ref{prop:product}.
\end{proof}

\begin{proposition}\label{prop:disjunion}
The metric thickening functors $\mt{\dum}$, $\vrm{\dum}{r}$, and $\cechm{\dum}{r}$ all preserve coproducts.
\end{proposition}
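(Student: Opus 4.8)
The plan is to establish the result for the metric realization functor $\mt{\dum}$ first and then obtain the Vietoris--Rips and \v{C}ech cases by factoring $\vrm{\dum}{r} = \mt{\dum} \comp \vr{\dum}{r}$ and $\cechm{\dum}{r} = \mt{\dum} \comp \cech{\dum}{r}$ as composites of coproduct-preserving functors. Since $\met$ has no coproducts, all coproducts are formed in $\pmet$ and $\psmt$; by Proposition~\ref{prop:smtlimits} the coproduct of $\m{M} = (X,K,\phi)$ and $\m{N} = (Y,L,\psi)$ in $\psmt$ is the componentwise object $(X \coprod Y, K \coprod L, \phi \coprod \psi)$, where $X \coprod Y$ is the disjoint union equipped with $d(x,y) = +\infty$ for $x \in X$, $y \in Y$, and $K \coprod L$ is the disjoint-union complex.

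The key observation for $\mt{\dum}$ is that $K \coprod L$ contains no simplex meeting both $K^0$ and $L^0$: every simplex lies entirely in $K$ or entirely in $L$. By Definition~\ref{def:metricRealizationFunctor}, any $\mu \in \mt{(\m{M} \coprod \m{N})}$ has support identified with a simplex of $K \coprod L$, so either $\supp\mu \subseteq X$ with $\supp\mu$ a simplex of $K$, or $\supp\mu \subseteq Y$ with $\supp\mu$ a simplex of $L$; this is also forced by the finite-moments condition, since a measure charging both factors would have infinite $p$-th moment. Thus $\mt{(\m{M} \coprod \m{N})}$ is the set-theoretic disjoint union $\mt{\m{M}} \disj \mt{\m{N}}$. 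To finish I would identify the metric: any coupling of two measures supported in $X$ is itself supported on $X \times X$, where distances are unchanged, so the $p$-Wasserstein metric restricts to that of $\P{X}$ (resp.\ $\P{Y}$) on each piece; and for $\mu$ supported in $X$, $\nu$ supported in $Y$, every $\pi \in \Gamma(\mu,\nu)$ is supported on $X \times Y$, where $d = +\infty$, so $d(\mu,\nu) = +\infty$. These are precisely the distances of the $\pmet$-coproduct $\mt{\m{M}} \coprod \mt{\m{N}}$, so $\mt{\dum}$ sends the coproduct to the coproduct by an isometric isomorphism.

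For the Vietoris--Rips and \v{C}ech thickenings it suffices, for finite $r$, to show $\vr{\dum}{r}$ and $\cech{\dum}{r}$ already preserve coproducts into $\psmt$ and then compose with $\mt{\dum}$. In $X \coprod Y$ any finite subset meeting both factors has diameter $+\infty > r$, and its balls $B_r(\cdot)$ have empty common intersection (each $B_r(x)$ with $x \in X$ lies inside $X$), so both $\vr{X \coprod Y}{r} \iso \vr{X}{r} \coprod \vr{Y}{r}$ and the analogous \v{C}ech isomorphism hold; applying the coproduct-preserving $\mt{\dum}$ gives the claim. For $r = +\infty$, where these simplicial isomorphisms fail because mixed subsets now have admissible diameter, the same conclusion for the thickenings still holds directly from the finite-moments condition, which discards every measure whose support meets both factors. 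The main obstacle is the metric bookkeeping of the second paragraph---verifying that the inter-component Wasserstein distance is genuinely $+\infty$ and that it reproduces the coproduct pseudo-metric---together with the need to phrase everything in $\pmet$ rather than $\met$ and to treat the $r = +\infty$ boundary case separately.
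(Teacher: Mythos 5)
Your proof is correct and takes essentially the same approach as the paper's: work in $\pmet$, where the coproduct carries $d=+\infty$ between the two components, and observe that this forces the metric thickenings of a coproduct to coincide with the coproducts of the thickenings. The paper's own proof is a three-sentence statement of exactly this fact; your write-up simply makes explicit the details it leaves implicit (the set-level splitting of supports, the Wasserstein metric bookkeeping, the simplicial-level isomorphisms for finite $r$, and the $r=+\infty$ case handled via the finite-moments condition).
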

\begin{proof}
We are working in the category $\pmet$ of pseudo-metric spaces, where coproducts exist.
Recall the coproduct $X \coprod Y$ has $d(x,y) = +\infty$ for $x \in X$ and $y \in Y$.
Hence the simplicial metric thickenings $\mt{\dum}$, $\vrm{\dum}{r}$, and $\cechm{\dum}{r}$ of a coproduct are simply the coproducts of the thickenings.
\end{proof}

\subsection{Metric Thickenings of Gluings}\label{sec:gluings}

Though Proposition~\ref{prop:disjunion} is somewhat uninteresting, another colimit operation to consider is the wedge sum.

\noindent
\begin{minipage}{.8\textwidth}\begin{definition}\label{def:wedgeSum}
Let $\term$ be the terminal object in a category $\cat{C}$.
Given $A,B \in \cat{C}$, $\term_A \colon \term \to A$, and $\term_B \colon \term \to B$, the \defn{wedge sum} of $A$ and $B$, denoted $A\vee B$, is the pushout of $\term_A$ and $\term_B$:
\\

\end{definition}
\end{minipage}
\begin{minipage}{.2\textwidth}
\centering
\tikzset{node distance=1.5cm}
\begin{tikzpicture}
   \node (term) {$\term$};
   \node (A) [below of=term] {$A$};
   \node (B) [right of=term] {$B$};
   \node (AvB) [below of=B] {$A\vee B$};
   \draw[->] (term) to node  {$\term_A$} (A);
   \draw[->] (term) to node {$\term_B$} (B);
   \draw[->] (A) to node [swap] {$\iota_A$} (AvB);
   \draw[->] (B) to node {$\iota_B$} (AvB);
\end{tikzpicture}
\end{minipage}

\vspace{-\baselineskip}\begin{proposition}
Wedge sums exist in $\met$, $\scpx$, and $\smt$.
\end{proposition}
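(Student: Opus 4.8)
The plan is to treat the three categories in turn, building each wedge sum directly as the pushout of the span $A \leftarrow \term \to B$ in $\met$ and $\scpx$, and then transporting those constructions to $\smt$ through the comma-category machinery of Section~\ref{ssec:commaCategories}. First I would unwind the data: the terminal object of $\scpx$ is a single vertex, so a map $\term \to K$ selects a basepoint vertex $v_0 \in K^0$, while the terminal object of $\met$ is the one-point space, so a map $\term \to X$ selects a basepoint $x_0 \in X$. In every case the wedge sum is therefore a based gluing of two objects along their chosen basepoints.

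For $\scpx$ I would take $K \vee L$ to be the disjoint-union complex $K \coprod L$ with the two basepoint vertices identified, and verify the universal property by hand: a cocone under the span is a pair of simplicial maps $K \to M$, $L \to M$ agreeing on the basepoint, and such a pair glues to a unique simplicial map on the quotient complex. This is routine because $\scpx$ already has coproducts and identifying two vertices is a well-behaved quotient of simplicial complexes.

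The $\met$ case is the one requiring care, precisely because $\met$ is not cocomplete (even coproducts fail, as noted in Section~\ref{ssec:simplicialThickeningComma}). I would give $X \vee Y$ the underlying set $(X \disj Y)/(x_0 \sim y_0)$ and the metric that agrees with $d_X$ on $X$, with $d_Y$ on $Y$, and satisfies $d(x,y) = d_X(x,x_0) + d_Y(y_0,y)$ across the two factors. The decisive observation, and the reason this pushout exists while arbitrary coproducts do not, is that gluing along a single basepoint keeps every cross-distance finite, so the result is a genuine classical metric rather than an extended or pseudo-metric. I would then check the metric axioms (symmetry and positivity are immediate; the triangle inequality reduces, by cases on which factors the three points lie in, to the triangle inequality in $X$ or in $Y$) and the universal property: short maps $f \colon X \to C$ and $g \colon Y \to C$ with $f(x_0) = g(y_0)$ induce a unique map on the quotient, whose shortness across the gluing follows from the triangle inequality in $C$.

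Finally, for $\smt = \rcomma{U}{\dum^0}$ I would assemble the two constructions rather than cite Proposition~\ref{prop:smtlimits}, which only furnishes limits (not colimits) in $\smt$. Given a wedge-sum span in $\smt$, its images under the domain and codomain functors $P_\met$ and $P_\scpx$ are exactly the wedge-sum spans in $\met$ and $\scpx$ whose colimits were just built, and the forgetful functor $U$ preserves this particular colimit because the underlying set of $X \vee Y$ is precisely the set-level pushout $(U X \disj U Y)/(x_0 \sim y_0)$. Hence the colimit construction in the proof of Lemma~\ref{lem:limits}, applied to this specific diagram, produces a wedge sum in $\comma{U}{\dum^0}$; since the structure maps remain isomorphisms, Corollary~\ref{cor:restrictedlimits} places this object in $\smt$. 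I expect the main obstacle to be the $\met$ step, where the pushout must be produced by hand and shown to be a bona fide finite metric space; a secondary pitfall is the temptation to invoke Proposition~\ref{prop:smtlimits} for $\smt$, which is unavailable here because $\met$ lacks general colimits and that result only guarantees limits.
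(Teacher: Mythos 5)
Your proposal is correct, and for $\met$ and $\scpx$ it is essentially identical to the paper's proof: the same quotient constructions $X \disj Y/(\term_X \sim \term_Y)$ with the sum metric across the two factors, and $K \disj L/(\term_K \sim \term_L)$, with the universal properties checked directly (the paper leaves the $\met$ verification as ``one can check''; you supply it).

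The genuine difference is at the $\smt$ step, and it is a refinement worth noting. The paper simply cites Proposition~\ref{prop:smtlimits}, but as you observe, that proposition's conclusion for $\smt$ (as opposed to $\psmt$) covers only \emph{limits}: obtaining colimits in $\rcomma{U}{\dum^0}$ via Lemma~\ref{lem:limits} requires the source functor $U \colon \met \to \Set$ to preserve the relevant colimits, and this cannot be deduced from adjoints, since $U$ has neither a left nor a right adjoint over classical $\met$ (Proposition~\ref{prop:basics} only provides them over $\pmet$). Your workaround is sound: you run the construction in the proof of Lemma~\ref{lem:limits} directly on the wedge-sum span, verify by hand that $U$ preserves this particular pushout (the underlying set of $X \vee Y$ is exactly the set-level pushout, which is true for the sum metric since no further identifications occur), and then invoke Corollary~\ref{cor:restrictedlimits} — noting that $\dum^0$ does preserve colimits, having a right adjoint — to conclude the resulting triple lies in the restricted comma category. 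Both routes produce the same object $(X \vee Y, K \vee L, \phi \vee \psi)$; yours makes explicit a justification that the paper's citation glosses over.
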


\begin{proof}
The description of the wedge sum in each category is essentially the same.
The terminal object in $\met$ is the metric space with a single point.
The wedge sum $X \vee Y$ is the metric space
$X \disj Y / (\term_X \sim \term_Y)$,
that is, $X$ and $Y$ are ``glued together'' at the points $\term_X$ and $\term_Y$.
We will refer to this common basepoint in $X \vee Y$ as $\term$.
The metric on $X \vee Y$ is given by $d(x,y) = d(x,\term) + d(\term,y)$ for $x \in X$ and $y \in Y$, while distances within $X$ and $Y$ are unchanged.
One can check that, with this metric, $X\vee Y$ satisfies the appropriate universal property.

The terminal object in $\scpx$ is the simplicial complex with a single vertex.
The wedge sum $K \vee L$ is the simplicial complex
$K \disj L / (\term_K \sim \term_L)$,
and again we refer to the common basepoint as $\term$.

Since wedge sums exist in both $\met$ and $\scpx$, they exist in $\smt$ by Proposition~\ref{prop:smtlimits}.
The wedge sum of $\m{M} = (X,K,\phi)$ and $\m{N} = (Y,L,\psi)$ is $\m{M} \vee \m{N} = (X\vee Y, K \vee L, \phi \vee \psi)$.
\end{proof}

\begin{remark}\label{rem:coproduct}
For any simplicial metric thickenings $\m{M}$ and $\m{N}$, the metric realization factors over the wedge sum.
Indeed, we have $\mtm{M} \vee \mtm{N} = \mtm{(M \vee N)}$.
However, if $F \colon \met \to \smt$, it is too much to expect that $F(X \vee Y) \iso F(X) \vee F(Y)$.
This fails, for example, if $F$ is the Vietoris--Rips functor; see Figures~\ref{fig:minipage1} and~\ref{fig:minipage2}.
Therefore proving that the metric thickening behaves well with respect to wedge sums is more delicate than the product case.
\end{remark}

\begin{figure}[ht]
\centering
\begin{minipage}[b]{0.35\linewidth}
\includegraphics[width=\textwidth]{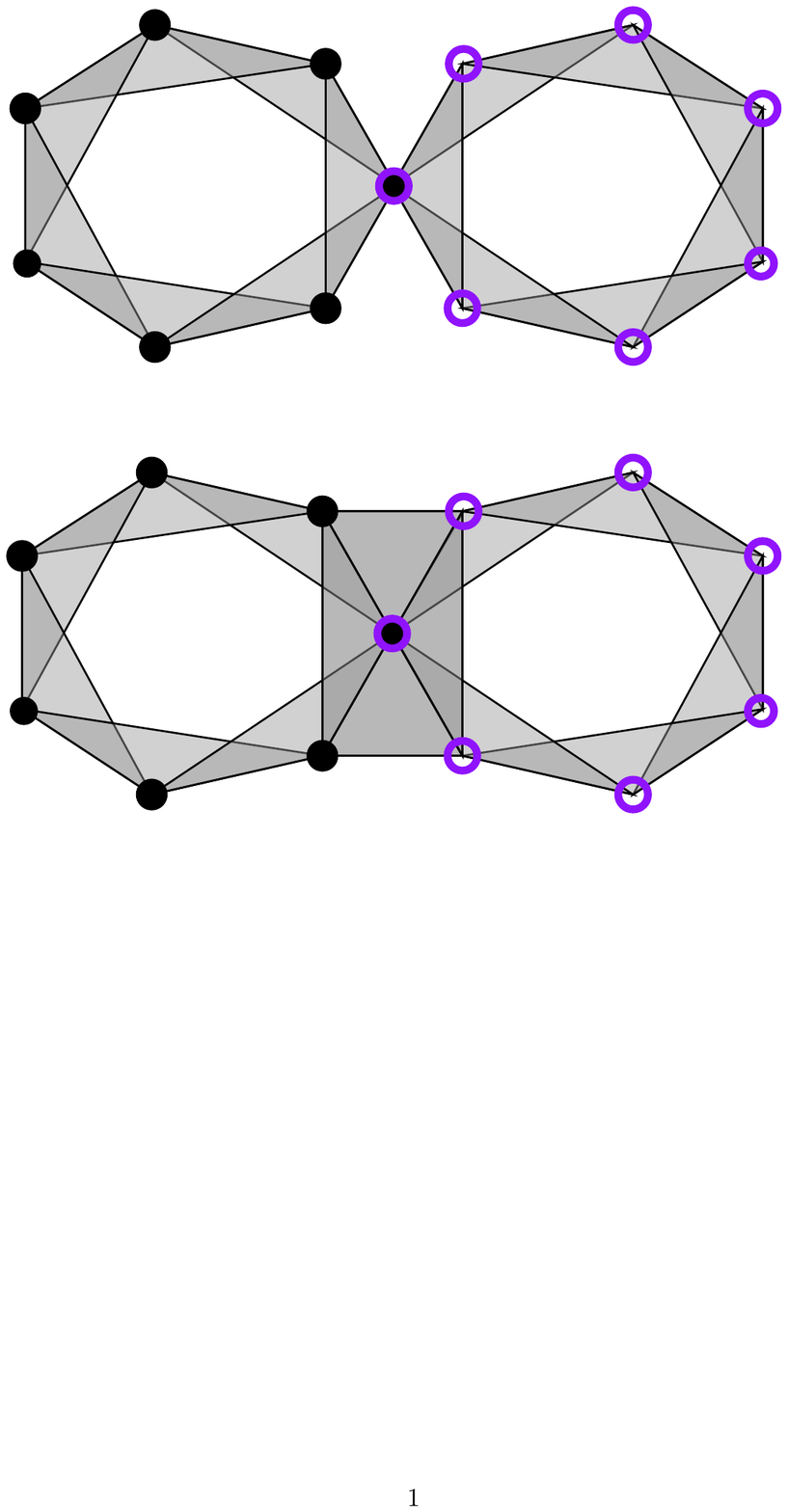}
\caption{$\vr{X}{r}\vee\vr{Y}{r}$}
\label{fig:minipage1}
\end{minipage}
\qquad
\begin{minipage}[b]{0.35\linewidth}
\includegraphics[width=\textwidth]{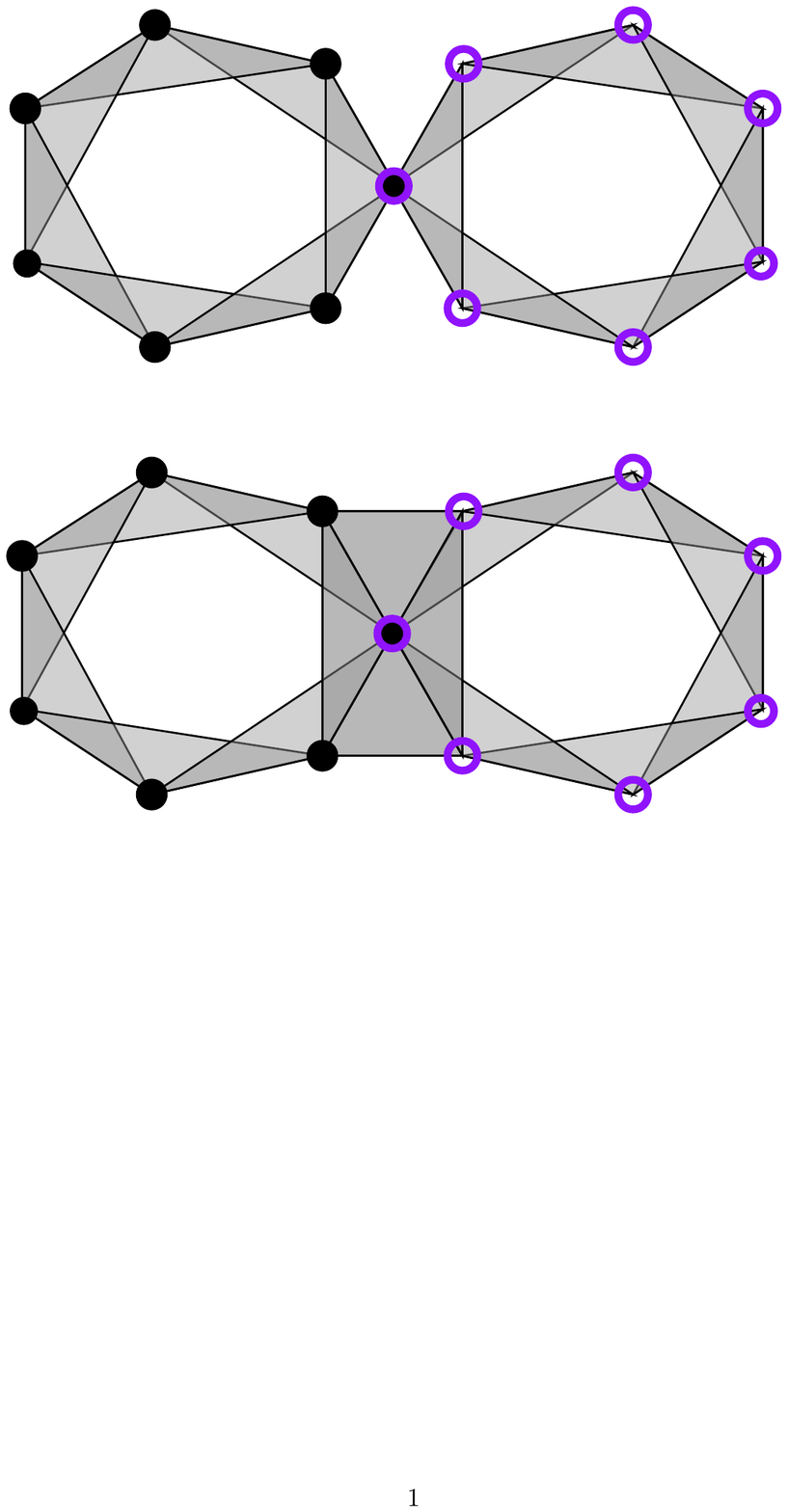}
\caption{$\vr{X \vee Y}{r}$}
\label{fig:minipage2}
\end{minipage}
\end{figure}

\begin{proposition}\label{prop:coproduct}
Let $\m{M}=(X, K, \phi)$ and $\m{N}=(Y, L, \psi)$ be simplicial thickenings.
Suppose the simplicial thickening $\m{V}=(X \vee Y, S, \phi)$ has the property that $S \supseteq K \vee L$, and if $\sigma\in S$ is a subset of neither $\phi(X)$ nor $\psi(Y)$, then $\sigma\cup \term$ is also a simplex in $S$. Then $\mtm{V} \homt \mtm{(M \vee N)}$.
\end{proposition}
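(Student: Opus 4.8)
The plan is to exhibit a strong deformation retraction of $\mtm{V}$ onto the subspace of \emph{pure} measures, i.e.\ those supported on a simplex contained in a single summand, and to identify this subspace with $\mtm{(M \vee N)}$. By Remark~\ref{rem:coproduct} we have $\mtm{(M \vee N)} = \mtm{M} \vee \mtm{N}$, and since $S \supseteq K \vee L$ this sits inside $\mtm{V}$; identifying the retract with it uses that the pure simplices of $S$ are exactly those of $K \vee L$ (automatic in the cases of interest, e.g.\ for Vietoris--Rips and \v{C}ech thickenings, where a one-sided face of a small simplex is again a small simplex). The geometric idea is that the extra, ``crossing'' simplices of $S$ all cone off to the basepoint $\term$, so crossing measures can be swept onto the pure ones by pushing the lighter of their two sides into $\deltam{\term}$.

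To make this precise, let $\eta_+, \eta_- \colon X \vee Y \to [0,\infty)$ be the positive and negative parts of the signed distance to $\term$ (so $\eta_+(z) = d(z,\term)$ on the $X$-summand and $0$ on the $Y$-summand, and vice versa for $\eta_-$); both are $1$-Lipschitz, hence $c_X(\mu) = \int \eta_+ \, d\mu$ and $c_Y(\mu) = \int \eta_- \, d\mu$ are continuous functionals on $\mtm{V}$ measuring the transport cost of collapsing each side onto $\term$. I would define the retraction $r$ as follows: if $c_Y(\mu) \le c_X(\mu)$, move all of the strict $Y$-side mass and the fraction $c_Y(\mu)/c_X(\mu)$ of the strict $X$-side mass to $\term$, and proceed symmetrically if $c_X(\mu) \le c_Y(\mu)$. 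When $c_X(\mu) = c_Y(\mu)$ both recipes collapse everything to $\deltam{\term}$, and when $\mu$ is already pure one of the two costs vanishes and $r(\mu) = \mu$, so $r$ is a retraction onto the pure measures. I would then take the straight-line homotopy $H(\mu,t) = (1-t)\mu + t\, r(\mu)$. Because $r$ only ever relocates mass to $\term$, we have $\supp H(\mu,t) \subseteq \supp(\mu) \cup \{\term\}$; when $\mu$ is crossing, $\phi(\supp \mu)$ is a simplex of $S$ lying in neither summand, so the hypothesis gives $\phi(\supp \mu) \cup \term \in S$, and downward-closure of $S$ then places every $H(\mu,t)$ in $\mtm{V}$ (for pure $\mu$ the homotopy is constant). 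Joint continuity of $H$ would follow as in Proposition~\ref{prop:product} from~\cite[Lemma~3.9]{AdamaszekAdamsFrick2018}, giving a strong deformation retraction and hence $\mtm{V} \homt \mtm{(M \vee N)}$.

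The step I expect to be the real obstacle is the continuity of $r$ itself. The naive recipe---``collapse whichever side carries less \emph{mass}''---is discontinuous: along a sequence whose $X$-side mass drifts into the basepoint (support points $x_n \to \term$) the mass split $\mu \mapsto \mu\vert_{X \setminus \term}$ jumps, the chosen side flips, and the two candidate images fail to agree in the limit. Two features of the construction above are designed to repair this. First, both the decision and the amount of mass moved are governed by the \emph{continuous} costs $c_X, c_Y$ rather than by the discontinuous masses; mass sitting near the basepoint contributes little cost, so collapsing it or leaving it in place are Wasserstein-close, which is exactly what reconciles the limit. Second, on the balanced locus $\{c_X = c_Y\}$ the map is forced to send $\mu$ to $\deltam{\term}$, so the ambiguity in ``which side to collapse'' is resolved continuously rather than by an arbitrary choice. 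Verifying that these two devices indeed make $r$, and hence $H$, continuous in the Wasserstein metric uniformly as support points approach $\term$ is where the bulk of the careful estimation will lie.
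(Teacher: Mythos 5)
Your proposal is correct, and at the global level it follows the same architecture as the paper's proof: include $\mtm{(M\vee N)}$ into $\mtm{V}$ using $S\supseteq K\vee L$, retract $\mtm{V}$ onto the pure measures by pushing mass into the basepoint, connect the retraction to the identity by the straight-line homotopy $H(\mu,t)=(1-t)\mu+t\,r(\mu)$, keep supports inside $S$ via the coning hypothesis, and invoke~\cite[Lemma~3.9]{AdamaszekAdamsFrick2018} for continuity of the homotopy. The genuine difference is the retraction itself, and here your route is not merely different but strictly better: the paper's map $\rho$ is exactly the ``naive recipe'' you reject. Writing $\mu=\epsilon\deltam{\term}+\sum_i\lambda_i\deltam{x_i}+\sum_j\eta_j\deltam{y_j}$, the paper compares the \emph{masses} $\lambda=\sum_i\lambda_i$ and $\eta=\sum_j\eta_j$, collapses the lighter side (removing an equal amount from the heavier side via the ratio $\eta/\lambda$), and argues continuity only by checking that the two branch formulas agree on the locus $\lambda=\eta$. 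That check is insufficient, and your objection applies verbatim to the paper: take $\mu_n=\tfrac12\deltam{x_n}+\tfrac12\deltam{y}$ with $x_n\to\term$ in $X$ and $y\in Y$ fixed, which is realizable inside $\vrm{X\vee Y}{r}$ by choosing $d(y,\term)=r/2$ and $d(x_n,\term)\to 0$. Then $\lambda=\eta$ for every $n$, so $\rho(\mu_n)=\deltam{\term}$, while $\mu_n\to\tfrac12\deltam{\term}+\tfrac12\deltam{y}$, a pure measure fixed by $\rho$; hence the paper's $\rho$ (and its homotopy) is discontinuous, because $\epsilon$, $\lambda$, $\eta$ are not continuous functionals of $\mu$ in the Wasserstein metric. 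Your cost functionals $c_X,c_Y$ are continuous (integrals of $1$-Lipschitz functions), and mass near $\term$ has negligible cost, which is precisely what removes the jump; so your construction repairs a real gap in the paper's argument while buying the same conclusion.

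Two remarks to close out the verification you defer. First, continuity of $r$ does go through with your devices: on the closed region $\{c_Y\le c_X\}$ one can write $r(\mu)=(1-s)P_\#\mu+s\,\deltam{\term}$, where $s=c_Y/c_X$ and $P\colon X\vee Y\to X\vee Y$ is the $1$-Lipschitz map collapsing $Y$ onto $\term$; pushforward along a $1$-Lipschitz map, the ratio $s$ (continuous where $c_X>0$), and convex combination are all continuous in the Wasserstein metric, and at the single remaining point $\deltam{\term}$ of that region one uses the transport bound $d(r(\mu),\mu)\le 2\min\{c_X(\mu),c_Y(\mu)\}$; pasting the two regions along $\{c_X=c_Y\}$, where both formulas return $\deltam{\term}$, finishes the argument. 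Second, the assumption you flag parenthetically, that the pure simplices of $S$ are exactly those of $K\vee L$, is genuinely needed and is in fact missing from the paper's statement and proof as well: with only $S\supseteq K\vee L$ and the coning condition, taking $K$ to be two vertices with no edge, $L$ a single vertex, and $S$ the full simplex on the resulting two-point wedge satisfies all hypotheses vacuously, yet $\mtm{V}$ is connected while $\mtm{(M\vee N)}$ is two points. This assumption is exactly what makes the image of the retraction land in $\mtm{(M\vee N)}$ rather than merely in the pure part of $\mtm{V}$, and as you note it holds automatically for the Vietoris--Rips and \v{C}ech thickenings used in Corollary~\ref{cor:vr-wedge}.
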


\begin{proof}
Elements of both $\mtm{V}$ and $\mtm{(M \vee N)}$ have the form 
\[ \epsilon \deltam{\term}+\sum_i \lambda_i \deltam{x_i} + \sum_j \eta_j \deltam{y_j} \]
where $\epsilon + \lambda + \eta = 1$.
(Here we assume $x_i\in X$ and $y_j\in Y$, and define $\lambda = \sum_i \lambda_i$ and $\eta =\sum_j \eta_j$.)
Further, elements of $\mtm{(M \vee N)}$ must satisfy $\lambda=0$ or $\eta=0$.
Since $S \supseteq K \vee L$, there is an inclusion $\iota \colon \mtm{(M \vee N)}\inj \mtm{V}$.

Define $\rho\colon \mtm{V} \surj \mtm{(M \vee N)}$ by 
\[ \epsilon \deltam{\term}+\sum_i \lambda_i \deltam{x_i} + \sum_j \eta_j \deltam{y_j} \mapsto \begin{cases} (2\eta + \epsilon)\deltam{\term} + \left(1-\frac{\eta}{\lambda}\right)\sum_i \lambda_i \deltam{x_i} & \text{ if } \lambda\geq \eta\\
 (2\lambda +\epsilon)\deltam{\term} + \left(1-\frac{\lambda}{\eta}\right)\sum_j \eta_j \deltam{y_j} & \text{ if } \eta\geq \lambda,
\end{cases} \]
setting $\frac{\eta}{\lambda} = 1$ if $\lambda = 0$ and $\frac{\lambda}{\eta} = 1$ in the case that $\eta=0$.
To see that $\rho$ is continuous, note that the two piecewise formulas agree when $\lambda=\eta$ (in which case the image of $\rho$ is $\term$).
By construction the image of $\rho$ is in $\mtm{(M \vee V)}$, and $\rho$ is in fact a deformation retract, so $\rho\comp\iota = \id$.

To complete the proof, $\iota \comp \rho$ is homotopic to the identity via 
$H(t,\mu) = t\id(\mu) + (1-t)\iota \comp \rho(\mu)$.
Two cases are required to show that the image of $H$ is indeed $\mtm{V}$.
If $\supp(\mu)\subseteq X$ or $\supp(\mu)\subseteq Y$, then $\supp(\iota \comp \rho(\mu))=\supp(\mu)$.
Otherwise $\supp(\iota \comp \rho(\mu))=\supp(\mu) \cup \term$.
Regardless, $(\phi\vee\psi)(\supp(\mu)\cup\supp(\iota \comp \rho(\mu)))$ is a simplex in $S$ by assumption.
It then follows from~\cite[Lemma~3.9]{AdamaszekAdamsFrick2018} that the homotopy $H$ is continuous.
\end{proof}

\begin{corollary}\label{cor:vr-wedge}
For any metric spaces $X$ and $Y$, the wedge sum factors through the metric Vietoris--Rips and \v{C}ech thickenings up to homotopy:
\begin{align*}
\vrm{X \vee Y}{r} &\homt \vrm{X}{r} \vee \vrm{Y}{r}\\
\cechm{X \vee Y}{r} &\homt \cechm{X}{r} \vee \cechm{Y}{r}.
\end{align*}
\end{corollary}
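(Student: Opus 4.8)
The plan is to realize both equivalences as instances of Proposition~\ref{prop:coproduct}, combined with the observation in Remark~\ref{rem:coproduct} that $\mtm{(M \vee N)} = \mtm{M} \vee \mtm{N}$. In each case I would take $\m{M}$ and $\m{N}$ to be the respective thickenings of $X$ and $Y$ and $\m{V}$ the thickening of $X \vee Y$, so that the entire content of the proof is to verify that $\m{V} = (X \vee Y, S, \phi)$ satisfies the two hypotheses of Proposition~\ref{prop:coproduct}: the containment $S \supseteq K \vee L$, and the ``add the basepoint'' closure condition that $\sigma \cup \term \in S$ whenever $\sigma \in S$ lies in neither summand.

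First, the Vietoris--Rips case. Set $\m{M} = \vr{X}{r}$, $\m{N} = \vr{Y}{r}$, and $\m{V} = \vr{X \vee Y}{r}$, so $S = \vr{X \vee Y}{r}$. The containment $S \supseteq K \vee L$ is immediate: a simplex of $K \vee L$ lies entirely in $X$ or entirely in $Y$ with diameter at most $r$ there, and since the wedge metric restricts to the original metric on each summand, it remains a simplex of $\vr{X \vee Y}{r}$. For the closure condition, suppose $\sigma \in S$ is contained in neither $\phi(X)$ nor $\psi(Y)$. Given $p \in \sigma \cap X$, there is a point $q \in \sigma \cap Y$ on the opposite summand, and $d(p,\term) \le d(p,\term) + d(\term,q) = d(p,q) \le \diam(\sigma) \le r$; symmetrically for $p \in \sigma \cap Y$. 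Hence every point of $\sigma$ is within $r$ of $\term$, so $\sigma \cup \term$ still has diameter at most $r$ and lies in $S$.

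Next, the \v{C}ech case. Set $\m{M} = \cech{X}{r}$, $\m{N} = \cech{Y}{r}$, and $\m{V} = \cech{X \vee Y}{r}$. Here the relevant fact is that, because distances within each summand are unchanged, for $x \in X$ the ball $B_r(x)$ computed in $X \vee Y$ meets $X$ exactly in the ball computed in $X$; thus a common intersection point of a simplex $\sigma \subseteq X$ of $\cech{X}{r}$ still witnesses that $\sigma$ is a simplex of $S$, giving $S \supseteq K \vee L$. For the closure condition, let $\sigma \in S$ straddle both summands and pick $z \in \bigcap_{p \in \sigma} B_r(p)$. Choosing a point of $\sigma$ on the summand not containing $z$ and expanding its wedge distance to $z$ through $\term$ forces $d(z,\term) \le r$; the same witness $z$ therefore also lies in $B_r(\term)$, so $\sigma \cup \term$ is a simplex of $S$.

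With both hypotheses verified, Proposition~\ref{prop:coproduct} gives $\mtm{V} \homt \mtm{(M \vee N)}$, and Remark~\ref{rem:coproduct} rewrites the right-hand side as $\mtm{M} \vee \mtm{N}$; substituting the chosen $\m{M}$, $\m{N}$, $\m{V}$ yields the two stated equivalences. I expect the main obstacle to be the \v{C}ech closure condition: unlike the Vietoris--Rips case, where adjoining $\term$ is a transparent diameter estimate, here one must argue that any witness $z$ to a nonempty common intersection automatically lies within $r$ of the basepoint, which relies on $z$ sitting on a single summand together with the wedge metric forcing every cross-summand distance to pass through $\term$.
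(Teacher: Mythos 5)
Your proposal is correct and takes essentially the same route as the paper: the paper's proof likewise consists of verifying the two hypotheses of Proposition~\ref{prop:coproduct} for $\vr{X\vee Y}{r}$ (the containment $\vr{X\vee Y}{r}\supseteq\vr{X}{r}\vee\vr{Y}{r}$ and the condition that $\sigma\cup\term$ remains a simplex when $\sigma$ straddles both summands), declaring the \v{C}ech case analogous. Your write-up simply makes explicit the short metric estimates (distances through $\term$ bounding $d(p,\term)$ by $\diam(\sigma)$, and the witness point $z$ lying in $B_r(\term)$) that the paper leaves implicit.
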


\begin{proof}
The Vietoris--Rips case follows since $\vr{X\vee Y}{r}\supseteq\vr{X}{r}\vee\vr{Y}{r}$, and since if $\sigma\in\vr{X\vee Y}{r}$ is not a subset of either $X$ or $Y$, then $\sigma\cup\term\in\vr{X\vee Y}{r}$. The \v{C}ech case is analogous.
\end{proof}

We remark that in Corollary~\ref{cor:vr-wedge}, the same proof (the homotopy equivalence from Proposition~\ref{prop:coproduct}) works equally well whether $X$ and $Y$ are finite or infinite.
By contrast, proofs of analogous statements for Vietoris--Rips and \v{C}ech simplicial complexes either don't apply to the infinite setting~\cite{lesnick2020quantifying}, or alternatively need to treat the infinite setting as a separate case~\cite{adamaszek2017vietoris}.

\section{Conclusion}

We give a categorical definition for certain constructions arising in applications of topological data analysis, namely, metric thickenings of a simplicial complex. 
The utility of this approach is seen in the concise proofs and organizational schema afforded by the language of category theory. 
In particular, we introduce two equivalent definitions of the category $\smt$ of simplicial metric thickenings and prove that this category possesses a number of desirable properties, such as the existence of forgetful functors with left and right adjoints to both the category of metric spaces and the category of simplicial complexes. 
We define metric realizations of the simplicial metric thickenings in $\smt$ as images of the metric realization functor $\mt{\dum}$.
We specialize to Vietoris--Rips and \v{C}ech metric thickenings by precomposing with appropriate functors from $\met$ to $\smt$.
Furthermore, we prove that products and wedge sums factor through the resulting metric Vietoris--Rips and \v{C}ech thickenings.

We end with some open questions.
\begin{enumerate}[topsep=0pt,itemsep=-1ex,partopsep=1ex,parsep=1ex]

\item Is the stability of persistent homology afforded by Vietoris--Rips and \v{C}ech simplicial complexes~\cite{ChazaldeSilvaOudot2014} also shared by simplicial metric thickenings? See~\cite[Conjecture~6.14]{AdamaszekAdamsFrick2018}.

\item Is $\vrmless{X}{r}$ homotopy equivalent to $\vrless{X}{r}$ for any metric space $X$ and scale $r>0$, and similarly for \v{C}ech thickenings?
Here the subscript $<$ means that a finite set is included as a simplex if its diameter is strictly less than $r$.

\item If one instead allows measures of infinite support, how much does this affect the homotopy type of a simplicial metric thickening?


\end{enumerate}

\section*{Acknowledgements}
We would like to thank Alex McCleary and Amit Patel for their support of the Category Theory Lab at Colorado State University.

\nocite{*}
\bibliographystyle{eptcs}
\bibliography{generic}

\end{document}